\documentclass[pdflatex,sn-mathphys-num]{sn-jnl}

\usepackage{graphicx}%
\usepackage{multirow}%
\usepackage{amsmath,amssymb,amsfonts}%
\usepackage{amsthm}%
\usepackage[title]{appendix}%
\usepackage{xcolor}%
\usepackage{textcomp}%
\usepackage{manyfoot}%
\usepackage{booktabs}%
\usepackage{algorithm}%
\usepackage{algorithmicx}%
\usepackage{algpseudocode}%
\usepackage{listings}%

\theoremstyle{thmstyleone}%
\newtheorem{theorem}{Theorem}
\newtheorem{proposition}[theorem]{Proposition}%

\theoremstyle{thmstyletwo}%

\theoremstyle{thmstylethree}%
\newtheorem{definition}{Definition}%

\newtheorem{assumption}{Assumption}
\newtheorem{corollary}{Corollary}
\newtheorem{lemma}{Lemma}
\begin{document}

\title[Article Title]{Structural Compatibility and Uniform Stability of Temporally Degenerate Parabolic Systems}


\author[]{\fnm{Amadou} \sur{Cissé}}\email{amadou.cisse@univ-lorraine.fr}


\affil[]{\orgdiv{CRAN, CNRS, UMR 7039}, \orgname{University of Lorraine}, \orgaddress{
\city{Metz}, \postcode{F-57000}, 
\country{France}}}





\abstract{Modern feedback design for distributed parameter systems presupposes that the closed-loop dynamics define a well-posed evolution problem.
This presupposition becomes nontrivial for temporally degenerate parabolic systems, where temporal degeneracy affects not only the analytical properties of the evolution equation but also the mathematical formulation of the feedback interconnection itself.
It is shown that admissible feedback interconnections for temporally degenerate parabolic systems are completely characterized by an operator compatibility condition linking the singular reaction operator with the actuator and observation operators. This characterization removes the singular component of the closed-loop dynamics and reduces the degenerate evolution equation to a regular evolution equation.
Building upon this regularized formulation, a critical--residual decomposition yields a uniform exponential stability certificate, which is subsequently extended to the original infinite-dimensional evolution through a finite-to-infinite lifting theorem. A constructive static output feedback synthesis is finally obtained as a consequence of these results. Numerical experiments illustrate the regularization mechanism, validate the stability certificate, and confirm the finite-to-infinite lifting.}

\keywords{Temporally degenerate parabolic systems,
Operator compatibility,
Static output feedback,
Critical--residual decomposition,
Uniform exponential stability.}

\maketitle

\section{Introduction}

The past decades have witnessed remarkable advances in the mathematical theory of infinite-dimensional evolution equations, providing the rigorous foundation on which modern feedback design for distributed parameter systems is formulated. The evolution-family theory established by Acquistapace and Terreni
\cite{AcquistapaceTerreni1984Existence,
AcquistapaceTerreni1985ConstantDomains,
AcquistapaceTerreni1987Unified}
laid the mathematical foundations for the analysis of non-autonomous evolution problems. In the context of degenerate parabolic equations, Ivasyshen and Medynsky~\cite{ivasyshen2000properties} investigated potential-type integral representations for parabolic systems degenerating on the initial hyperplane, while Ivasyshen and Voznyak~\cite{ivasyshen2000fundamental} studied fundamental solutions of the Cauchy problem for a class of degenerate parabolic equations. In parallel, the operator-theoretic formulation developed by Curtain and Zwart~\cite{CurtainZwart1995InfiniteDimensional} and systematically developed by Jacob and Zwart~\cite{JacobZwart2018OperatorTheoretic} established a unified mathematical setting for the analysis and control of distributed parameter systems. In this well-established setting, the synthesis of feedback laws is naturally formulated under the implicit assumption that the resulting closed-loop dynamics define a well-posed evolution problem.

This implicit assumption is fundamentally challenged when the coefficient multiplying the time derivative is allowed to vanish or become singular. In this case, the closed-loop dynamics may no longer generate a regular evolution process, thereby invalidating one of the basic assumptions underlying classical control design. Degenerate evolution equations have been studied from an abstract analytical perspective by Favini and Yagi~\cite{FaviniYagi1999Degenerate}, while a substantial control literature has developed for parabolic equations with spatially degenerate diffusion. In particular, Carleman estimates, controllability, and stabilization results have been established by Cannarsa and collaborators
\cite{CannarsaMartinezVancostenoble2008Carleman,
CannarsaMartinezVancostenoble2009Boundary}, and more recent works have developed Fredholm and backstepping stabilization techniques for spatially degenerate parabolic equations
\cite{GagnonLissyMarx2021Fredholm,
LissyMoreno2024Backstepping}.
These results demonstrate the substantial progress achieved for degenerate parabolic control problems. The situation considered here is structurally different: the degeneracy acts on the time derivative itself and may therefore affect the admissibility of the closed-loop evolution before stability can be addressed.

Against this mathematical background, the stabilization of distributed parameter systems has followed several complementary research directions. The literature encompasses spectral feedback techniques \cite{Triggiani1980Riccati,
BadraTakahashi2014Stabilization}, finite-dimensional compensation methods, reduced-order approximations, and spillover-aware stabilization strategies
\cite{Curtain1984FiniteDimensional,
CurtainSalamon1986Compensators,
CurtainGlover1986Robust,
Ito1990Galerkin,
GruneMeurer2022SmallGain,
TrelatWangXu2024POD}.
More recently, this landscape has been enriched by static output feedback methodologies for distributed parameter systems
\cite{GahlawatPeet2016SOF,
WuZhang2020StaticOutput,
LhachemiPrieur2025SOF}.
Despite their methodological diversity, these approaches share a common mathematical premise: controller synthesis is carried out on a closed-loop evolution model whose well-posedness is taken for granted. Consequently, the structural admissibility of the feedback interconnection is assumed rather than established, leaving open the question of whether such an interconnection can always be defined.

This observation raises a more fundamental question. Before addressing stabilization itself, one must determine whether a feedback interconnection compatible with the degenerate dynamics can be defined.
This issue is independent of any particular synthesis methodology and precedes questions of stability, optimality, or robustness. Instead, it concerns the conditions under which a feedback law gives rise to a mathematically admissible closed-loop evolution problem. The mathematical formulation of the control problem must therefore be revisited before controller synthesis itself can be systematically addressed.

The analysis establishes a structural principle governing the admissibility of feedback interconnections for temporally degenerate parabolic systems. Admissibility is characterized by an operator compatibility condition linking the intrinsic dynamics of the evolution equation with its input--output operators. This characterization transforms the original degenerate closed-loop problem into a regularized evolution equation, thereby recovering the mathematical setting required for infinite-dimensional control analysis. The resulting formulation provides the basis for a critical--residual decomposition leading to a uniform exponential stability certificate and a structural finite-to-infinite lifting theorem. A constructive static output feedback realization is then obtained as a consequence of this analysis. The theoretical developments are finally corroborated by numerical simulations validating the compatibility principle, the regularization identity, and the critical--residual stability mechanism.

The paper is organized as follows. Section~\ref{sec:problem}
introduces the class of temporally degenerate parabolic systems under consideration and formulates the underlying control problem.
Section~\ref{sec:compatibility} establishes the structural
compatibility principle, and Section~\ref{sec:factorization} provides its complete operator-theoretic characterization.
Section~\ref{sec:regular-evolution} analyzes the resulting regularized evolution equation.
Sections~\ref{sec:critical-residual} and \ref{sec:structural-lifting} develop the critical--residual stability theory and the associated finite-to-infinite lifting result, respectively.
Section~\ref{sec:synthesis} presents a constructive static output feedback realization.
Finally, Section~\ref{sec:numerics} provides a numerical validation of the principal theoretical results.

\section{Mathematical Framework and Problem Formulation}
\label{sec:problem}

This section formulates the mathematical setting and the feedback problem that serve as the foundation for the subsequent analysis.

\subsection{Abstract Evolution Equation}
\label{subsec:abstract-framework}

The analysis is developed within the following abstract evolution formulation.

Let \(X\) be a Hilbert space endowed with inner product \(\langle\cdot,\cdot\rangle_X\) and associated norm \(\|\cdot\|_X\).
The control and observation spaces are
\[
X_u=\mathbb{R}^{m},
\qquad
X_y=\mathbb{R}^{p}.
\]

The class of systems under consideration is described by
\begin{equation}
\alpha(t)\dot z(t)
+
\beta(t)A_0z(t)
+
A_1z(t)
+
\alpha(t)A_rz(t)
=
Bu(t),
\qquad
t\in(0,T],
\label{eq:abstract-system}
\end{equation}
together with the output equation
\begin{equation}
y(t)=Cz(t),
\label{eq:abstract-output}
\end{equation}
where
\[
z(t)\in X,
\qquad
u(t)\in X_u,
\qquad
y(t)\in X_y.
\]

The principal operator
\[
A_0:D(A_0)\subset X\rightarrow X
\]
describes the dominant parabolic dynamics. The bounded operator
\[
A_r\in\mathcal L(X)
\]
collects regular lower-order contributions, while the bounded operator
\[
A_1\in\mathcal L(X)
\]
models the reaction dynamics. The input and output operators are
\[
B\in\mathcal L(X_u,X),
\qquad
C\in\mathcal L(X,X_y),
\]
respectively.

Unlike the remaining operators, \(A_1\) is not multiplied by the temporal coefficient \(\alpha(t)\).
This structural asymmetry is the distinctive feature of the evolution equation and will play a central role in the subsequent analysis.

The scalar functions
\[
\alpha,\beta:(0,T]\rightarrow(0,\infty)
\]
describe the temporal modulation of the dynamics. The coefficient \(\alpha(t)\) weights the time derivative, whereas \(\beta(t)\) weights the principal parabolic operator. Their analytical properties are introduced in Subsection~\ref{subsec:analytical-setting}.

The above formulation encompasses a broad class of distributed-parameter systems. The next subsection introduces a representative temporally degenerate parabolic realization together with its closed-loop formulation.

\subsection{Degenerate Parabolic Model and Closed-Loop Reformulation}
\label{subsec:parabolic-model}

The following temporally degenerate parabolic model provides a representative realization of the abstract evolution equation.

Let
\[
\Omega\subset\mathbb{R}^{n}
\]
be a bounded domain with boundary \(\partial\Omega\)
of class \(C^{2}\).
The state evolution is governed by
\begin{equation}
\alpha(t)\partial_t z(t,x)
-
a^{2}\beta(t)\Delta z(t,x)
+
a_{0}z(t,x)
=
f(t,x),
\qquad
(t,x)\in(0,T]\times\Omega,
\label{eq:parabolic-model}
\end{equation}
where
\[
a>0,
\qquad
a_{0}\ge0,
\]
and
\[
\alpha,\beta\in C^{1}((0,T])
\]
are positive on
\((0,T]\).

The dynamics are supplemented with the homogeneous Dirichlet boundary condition
\begin{equation}
z(t,x)=0,
\qquad
(t,x)\in(0,T]\times\partial\Omega,
\label{eq:boundary-condition}
\end{equation}
and the initial condition
\begin{equation}
z(0,x)=z_{0}(x),
\qquad
x\in\Omega.
\label{eq:initial-condition}
\end{equation}

The distributed input is
\[
f(t,\cdot)=Bu(t),
\]
and the measured output is given by~\eqref{eq:abstract-output}.

Setting
\[
X=L^{2}(\Omega),
\]
the above model is recovered from \eqref{eq:abstract-system}
by choosing
\[
A_{0}
=
-a^{2}\Delta,
\qquad
D(A_{0})
=
H^{2}(\Omega)\cap H_{0}^{1}(\Omega),
\]
and
\[
A_{1}=a_{0}I,
\qquad
A_{r}=0.
\]

The abstract evolution equation nevertheless accommodates additional regular lower-order dynamics through the operator \(A_r\).
The present model therefore corresponds to the particular case \(A_r=0\).

The control law is chosen as
\begin{equation}
u(t)
=
\left(
K_{0}
+
\alpha(t)K_{1}
\right)
y(t),
\label{eq:control-law}
\end{equation}
where
\[
K_{0},K_{1}\in\mathbb R^{m\times p}
\]
are constant gain matrices.

Substituting \eqref{eq:control-law}
into \eqref{eq:abstract-system}
yields
\begin{equation}
\alpha(t)\dot z
+
\beta(t)A_{0}z
+
\left(
A_{1}-BK_{0}C
\right)z
+
\alpha(t)
\left(
A_{r}-BK_{1}C
\right)z
=
0.
\label{eq:closed-loop-before-desingularization}
\end{equation}

Since
\[
\alpha(t)>0,
\qquad
t>0,
\]
division by \(\alpha(t)\) gives the equivalent desingularized evolution equation. Defining
\begin{equation}
d(t)
=
\frac{\beta(t)}{\alpha(t)},
\label{eq:d-definition}
\end{equation}
one obtains
\begin{equation}
\dot z
+
d(t)A_{0}z
+
\frac{1}{\alpha(t)}
\left(
A_{1}-BK_{0}C
\right)z
+
\left(
A_{r}-BK_{1}C
\right)z
=
0.
\label{eq:closed-loop-singular}
\end{equation}

Equation~\eqref{eq:closed-loop-singular} is the fundamental object analyzed throughout the remainder of the paper.

\subsection{Analytical Setting and Problem Formulation}
\label{subsec:analytical-setting}

The analysis of \eqref{eq:closed-loop-singular} is carried out under the following assumptions.

The first assumption specifies the temporal coefficients governing the degenerate dynamics.

\begin{assumption}[Temporal coefficients]
\label{ass:time}

The functions \(\alpha,\beta\in C^{1}((0,T])\)
satisfy
\[
\alpha(t)>0,
\qquad
\beta(t)>0,
\qquad
t\in(0,T].
\]

Moreover,
\(\alpha(0)=0\),
and the ratio
\[
d(t):=\frac{\beta(t)}{\alpha(t)}
\]
satisfies
\begin{equation}
0<d_-\le d(t)\le d_+,
\qquad
t\in(0,T],
\label{eq:d-bounds}
\end{equation}
for some constants \(d_->0\) and \(d_+<\infty\).

\end{assumption}

The second assumption concerns the principal parabolic operator.

\begin{assumption}[Principal operator]
\label{ass:A0}
The operator \(A_0:D(A_0)\subset X\rightarrow X\) is densely defined, self-adjoint, strictly positive, and possesses a compact resolvent.
\end{assumption}

In the abstract evolution equation, the singular operator
\[
\frac{1}{\alpha(t)}
\left(
A_{1}-BK_{0}C
\right)
\]
has no analogue in classical nondegenerate parabolic evolution equations. Understanding the structural consequences induced by this singular term is the central objective of the present work.

The subsequent analysis is guided by the following questions.

\begin{enumerate}

\item
Does temporal degeneracy impose intrinsic compatibility conditions on the system operators?

\item
If such conditions exist, can they be characterized explicitly in terms of the operators \( A_{1},\; B,
\;\text{and}\; C? \)

\item
Once compatibility has been achieved, under which conditions does the resulting regularized evolution equation generate a uniformly exponentially stable evolution?

\end{enumerate}

The next section answers the first of these questions by identifying the structural consequences of temporal degeneracy.

\section{Compatibility Principle Induced by Temporal Degeneracy}
\label{sec:compatibility}

This section establishes the structural compatibility principle induced by temporal degeneracy. It shows that the singular structure of the desingularized evolution equation imposes an intrinsic compatibility condition on the system operators. This condition is shown to be a necessary consequence of the existence of admissible regular solutions, independently of any stabilization requirement.

\subsection{Structural Obstruction and Compatibility Principle}
\label{subsec:compatibility-principle}

The desingularized evolution equation \eqref{eq:closed-loop-singular} contains the singular operator
\[
\frac{1}{\alpha(t)}
\left(
A_{1}-BK_{0}C
\right),
\]
whose behavior differs fundamentally from that of the remaining terms of the evolution equation.

After desingularization, the principal parabolic operator is weighted by the bounded coefficient
\[
d(t)=\frac{\beta(t)}{\alpha(t)},
\]
whereas the operator
\[
A_{1}-BK_{0}C
\]
is amplified directly by the singular factor
\[
\frac{1}{\alpha(t)}.
\]
Consequently, this contribution cannot be regarded as a bounded perturbation of the evolution dynamics. Its influence is entirely determined by the behavior of the temporal coefficient near the degenerate time.

The first step consists in identifying the structural constraint induced by this singular amplification. This requires the following degeneracy assumption.

\begin{assumption}[Strong temporal degeneracy]
\label{ass:strong-degeneracy}

The temporal coefficient
\[
\alpha\in C([0,T])\cap C^{1}((0,T])
\]
satisfies
\[
\alpha(t)>0,
\qquad
t\in(0,T],
\]
and
\[
\alpha(0)=0.
\]

Moreover,
\[
\frac{1}{\alpha}
\notin
L^{1}(0,\delta),
\qquad
\forall\,\delta\in(0,T],
\]
that is,
\begin{equation}
\int_{0}^{\delta}
\frac{dt}{\alpha(t)}
=
+\infty,
\qquad
\forall\,\delta\in(0,T].
\label{eq:strong-degeneracy}
\end{equation}

\end{assumption}

Assumption~\ref{ass:strong-degeneracy} identifies the regime in which the singular coefficient cannot be compensated through time integration.
The following lemma shows that this property imposes a necessary constraint on the traces of admissible regular solutions.

The trace obstruction established above immediately yields the following auxiliary result.

\begin{lemma}[Trace Obstruction Induced by Strong Temporal Degeneracy]
\label{lem:trace-obstruction}

Assume that
Assumption~\ref{ass:strong-degeneracy}
holds.
Let
\(
h\in C([0,T];X)
\)
satisfy
\(
\frac{h}{\alpha}
\in
L^{1}(0,T;X).
\)

Then
\(
h(0)=0.
\)

\end{lemma}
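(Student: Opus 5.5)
The plan is to argue by contradiction, using continuity of \(h\) at \(t=0\) together with the non-integrability \eqref{eq:strong-degeneracy} from Assumption~\ref{ass:strong-degeneracy}. Suppose \(h(0)\neq 0\), and set \(\varepsilon:=\|h(0)\|_X>0\).

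The first step is a local lower bound on \(\|h\|_X\). Since \(h\in C([0,T];X)\), there is \(\delta\in(0,T]\) such that
\[
\|h(t)-h(0)\|_X\le \tfrac{\varepsilon}{2},\qquad t\in[0,\delta].
\]
The reverse triangle inequality then gives \(\|h(t)\|_X\ge \varepsilon/2\) on \([0,\delta]\).

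The second step compares the integrals. Because \(\alpha(t)>0\) on \((0,T]\), the function \(h/\alpha\) is strongly measurable on \((0,T]\). By Bochner integrability, \(h/\alpha\in L^1(0,T;X)\) means \(\|h\|_X/\alpha\in L^1(0,T)\). Hence
\[
\frac{\varepsilon}{2}\int_0^\delta\frac{dt}{\alpha(t)}
\le\int_0^\delta\frac{\|h(t)\|_X}{\alpha(t)}\,dt
\le\Big\|\frac{h}{\alpha}\Big\|_{L^1(0,T;X)}<\infty .
\]
This contradicts \eqref{eq:strong-degeneracy} for this \(\delta\). Therefore \(h(0)=0\).

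There is no genuine obstacle in this argument. The only point needing care is what \(h/\alpha\in L^1\) means, since \(\alpha(0)=0\) and \(h/\alpha\) is defined only almost everywhere, namely on \((0,T]\). This does not affect the integral, because the single point \(t=0\) has measure zero. The rest is the pointwise lower bound combined with monotonicity of the Lebesgue integral.
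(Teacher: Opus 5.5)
Your proof is correct and follows the paper's argument essentially step for step. Both use continuity at \(t=0\) to get \(\|h(t)\|_X\ge\tfrac12\|h(0)\|_X\) on some \([0,\delta]\), then contradict \eqref{eq:strong-degeneracy} via \(\int_0^\delta \|h\|_X/\alpha\,dt\ge \tfrac12\|h(0)\|_X\int_0^\delta dt/\alpha=+\infty\); your extra remark that Bochner integrability means \(\|h\|_X/\alpha\in L^1(0,T)\) makes explicit a step the paper leaves implicit.
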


\begin{proof}

Suppose, by contradiction, that
\[
h(0)\neq0.
\]

Since
\[
h\in C([0,T];X),
\]
there exists
\(
\delta\in(0,T]
\)
such that
\[
\|h(t)-h(0)\|_X
\le
\frac12\|h(0)\|_X,
\qquad
t\in[0,\delta].
\]

Hence,
\[
\|h(t)\|_X
\ge
\frac12\|h(0)\|_X,
\qquad
t\in[0,\delta].
\]

Therefore,
\[
\int_{0}^{\delta}
\frac{\|h(t)\|_X}{\alpha(t)}
\,dt
\ge
\frac12
\|h(0)\|_X
\int_{0}^{\delta}
\frac{dt}{\alpha(t)}.
\]

Assumption~\ref{ass:strong-degeneracy} implies
\[
\int_{0}^{\delta}
\frac{dt}{\alpha(t)}
=
+\infty,
\]
which yields
\[
\int_{0}^{\delta}
\frac{\|h(t)\|_X}{\alpha(t)}
\,dt
=
+\infty.
\]

This contradicts the assumption
\[
\frac{h}{\alpha}
\in
L^{1}(0,T;X).
\]

Therefore,
\[
h(0)=0.
\]

\end{proof}

The previous lemma provides the key ingredient for establishing the compatibility principle.

\begin{theorem}[Operator Compatibility Principle]
\label{thm:compatibility-principle}

Assume that Assumptions~\ref{ass:time} and \ref{ass:strong-degeneracy} hold.

Let
\[
\mathrm D:=A_1-BK_0C,
\qquad
L:=A_r-BK_1C.
\]

Suppose that the singular evolution equation
\begin{equation}
\dot z
+
d(t)A_0z
+
\frac{1}{\alpha(t)}\mathrm{D}z
+
Lz
=
0
\label{eq:singular-evolution-compatibility}
\end{equation}
is regularly solvable at the degenerate time for a dense set of initial states. More precisely, assume that there exists a dense subspace
\(
\mathcal D\subset X
\)
such that, for every
\(
z_0\in\mathcal D,
\)
equation~\eqref{eq:singular-evolution-compatibility}
admits a solution
\[
z\in W^{1,1}(0,T;X)
\]
satisfying
\[
z(t)\in D(A_0)
\quad
\text{for almost every }
t\in(0,T),
\]
\[
A_0z\in L^1(0,T;X),
\qquad
z(0)=z_0.
\]

Then
\(
\mathrm D=0.
\)
Equivalently,
\[
A_1=BK_0C.
\]

\end{theorem}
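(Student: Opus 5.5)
The plan is to isolate the singular term, apply Lemma~\ref{lem:trace-obstruction} with $h:=\mathrm D z$, and then pass from the dense set $\mathcal D$ to all of $X$ using boundedness of $\mathrm D$.

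Fix $z_0\in\mathcal D$ and let $z$ be the regular solution given by the hypothesis. From \eqref{eq:singular-evolution-compatibility}, for almost every $t\in(0,T)$,
\[
\frac{1}{\alpha(t)}\mathrm D z(t) = -\dot z(t) - d(t)A_0 z(t) - L z(t).
\]
Each term on the right lies in $L^1(0,T;X)$:
\begin{itemize}
\item $\dot z\in L^1(0,T;X)$ because $z\in W^{1,1}(0,T;X)$;
\item $d A_0 z\in L^1(0,T;X)$ because $0<d(t)\le d_+$ by \eqref{eq:d-bounds} and $A_0z\in L^1(0,T;X)$;
\item $Lz\in L^1(0,T;X)$ because $L=A_r-BK_1C$ is bounded and $z$ is bounded on $[0,T]$.
\end{itemize}
Hence $\mathrm D z/\alpha\in L^1(0,T;X)$.

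Set $h(t):=\mathrm D z(t)$. Since $W^{1,1}(0,T;X)\hookrightarrow C([0,T];X)$ and $\mathrm D=A_1-BK_0C\in\mathcal L(X)$, we have $h\in C([0,T];X)$. Lemma~\ref{lem:trace-obstruction} then gives $h(0)=0$. Using $z(0)=z_0$, this reads $\mathrm D z_0=0$.

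This holds for every $z_0\in\mathcal D$, so $\mathrm D$ vanishes on a dense subspace. Being bounded, $\mathrm D$ vanishes on all of $X$, i.e.\ $A_1=BK_0C$.

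The main obstacle is making precise the sense in which the equation holds. It must hold almost everywhere, or at least in $L^1_{\mathrm{loc}}((0,T];X)$, so that the rearrangement identifies $\mathrm D z/\alpha$ as an $L^1(0,T;X)$ function up to $t=0$. Integrability near $0$ is exactly what the regularity hypotheses supply. One should also check that the $W^{1,1}$ representative is the continuous one, so that $z(0)=z_0$ is meaningful. Once these points are settled, the rest is routine.
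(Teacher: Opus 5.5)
Your proposal is correct and follows the paper's proof essentially step by step: both isolate $\mathrm D z/\alpha=-\dot z-dA_0z-Lz\in L^1(0,T;X)$, apply Lemma~\ref{lem:trace-obstruction} to $h=\mathrm D z\in C([0,T];X)$ to obtain $\mathrm D z_0=0$ for all $z_0\in\mathcal D$, and conclude by density and boundedness of $\mathrm D$ (the paper phrases this last step as closedness of $\ker\mathrm D$). Your remarks on reading the equation almost everywhere and on using the continuous representative of $z$ make explicit what the paper uses implicitly.
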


\begin{proof}

Let \(z_0\in\mathcal D\) and let \(z\) denote the corresponding regular solution.

Since
\[
z\in W^{1,1}(0,T;X),
\]
it follows that
\(
\dot z\in L^1(0,T;X)
\)
and
\(
z\in C([0,T];X).
\)
Moreover,
\[
A_0z\in L^1(0,T;X),
\qquad
d\in L^\infty(0,T),
\]
imply
\[
dA_0z\in L^1(0,T;X),
\]
while \(L\in\mathcal L(X)\) gives
\(Lz\in L^1(0,T;X)\).

Equation~\eqref{eq:singular-evolution-compatibility} therefore yields
\[
\frac{\mathrm{D}z}{\alpha}
=
-\dot z
-
dA_0z
-
Lz
\in
L^1(0,T;X).
\]

The boundedness of \(\mathrm D\in\mathcal L(X)\), together with
\(z\in C([0,T];X)\), implies that \(t\mapsto \mathrm{D}z(t)\in C([0,T];X)\).
Applying Lemma~\ref{lem:trace-obstruction} with \(h(t)=\mathrm{D}z(t)\) yields \(\mathrm{D}z(0)=0\).

Since \(z(0)=z_0\), it follows that
\(\mathrm{D}z_0=0\).
As \(z_0\) was arbitrary in \(\mathcal D\), one obtains
\[
\mathcal D\subset\ker \mathrm D.
\]

Since \(\mathrm D\in\mathcal L(X)\), its kernel is closed in \(X\).
Since \(\mathcal D\) is dense in \(X\), it follows that
\[
\ker \mathrm D=X,
\]
which proves
\[
\mathrm D=0.
\]

Consequently,
\[
A_1=BK_0C.
\]

\end{proof}

Theorem~\ref{thm:compatibility-principle} shows that the operator identity \( A_{1}=BK_{0}C \) is an intrinsic consequence of temporal degeneracy. It is not imposed as a controller design requirement but follows necessarily from the existence of admissible regular solutions to the singular evolution equation.

This conclusion is independent of the dissipative properties of the principal operator \(A_{0}\) and does not rely on any stability argument. It is solely determined by the interaction between the singular reaction operator and the input--output structure represented by the operators \(B\) and \(C\).

The compatibility principle established in this section is purely structural. Although it identifies the operator identity required for regular solvability, it does not address the question of whether such an identity can actually be realized for a given system.

The next section answers this question by characterizing the solvability of the operator equation
\[
A_{1}=BK_{0}C,
\]
thereby providing necessary and sufficient conditions for structural compatibility.

\section{Operator-Theoretic Characterization of the Compatibility Principle}
\label{sec:factorization}

The compatibility principle established in
Section~\ref{sec:compatibility} reduces the analysis to the operator equation
\begin{equation}
A_{1}=BK_{0}C.
\label{eq:compatibility-equation}
\end{equation}
The objective of this section is to characterize completely the solvability of \eqref{eq:compatibility-equation}. It is shown that this question depends exclusively on two geometric properties involving the kernel of the observation operator and the range of the actuator.

\subsection{Operator Factorization}
\label{subsec:factorization-theorem}

Equation~\eqref{eq:compatibility-equation}
requires the reaction operator to factor through the observation and actuation operators. Such a factorization is possible only if the action of \(A_{1}\) is compatible with both the information supplied by the measurements and the authority provided by the actuators.

The operator equation \eqref{eq:compatibility-equation} admits a complete geometric characterization.

\begin{theorem}[Operator Factorization Theorem]
\label{thm:operator-factorization}

Let
\[
A_{1}\in\mathcal L(X),
\qquad
B\in\mathcal L(\mathbb R^{m},X),
\qquad
C\in\mathcal L(X,\mathbb R^{p}).
\]

The following statements are equivalent.

\begin{enumerate}

\item[(i)]
There exists a bounded linear operator
\[
K_{0}\in\mathcal L(\mathbb R^{p},\mathbb R^{m})
\]
such that
\[
A_{1}=BK_{0}C.
\]

\item[(ii)]
The operators satisfy
\begin{equation}
\ker(C)\subseteq\ker(A_{1}),
\label{eq:kernel-condition}
\end{equation}
and
\begin{equation}
\operatorname{Ran}(A_{1})
\subseteq
\operatorname{Ran}(B).
\label{eq:range-condition}
\end{equation}

\end{enumerate}

\end{theorem}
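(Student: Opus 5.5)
The direction (i)\(\Rightarrow\)(ii) is immediate, so the work lies in (ii)\(\Rightarrow\)(i). If \(A_1=BK_0C\), then \(Cx=0\) forces \(A_1x=BK_0Cx=0\), which gives \eqref{eq:kernel-condition}. Moreover \(A_1x=B(K_0Cx)\in\operatorname{Ran}(B)\), which gives \eqref{eq:range-condition}.

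For the converse, the plan is to construct \(K_0\) by composing three maps, exploiting the finite dimensionality of \(X_u=\mathbb R^m\) and \(X_y=\mathbb R^p\).

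\textbf{Factor through \(C\).} The subspace \(\operatorname{Ran}(C)\subseteq\mathbb R^p\) is finite dimensional. By \eqref{eq:kernel-condition}, the assignment \(Cx\mapsto A_1x\) is well defined: if \(Cx=Cx'\), then \(x-x'\in\ker C\subseteq\ker A_1\). This gives a linear map
\[
M:\operatorname{Ran}(C)\to X,\qquad M(Cx)=A_1x .
\]
Extend \(M\) to all of \(\mathbb R^p\) by setting it to zero on an algebraic complement of \(\operatorname{Ran}(C)\), for instance its orthogonal complement. Every linear map out of a finite-dimensional space is bounded, so \(M\in\mathcal L(\mathbb R^p,X)\). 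By construction \(A_1=MC\).

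\textbf{Factor through \(B\).} The range of \(M\) is contained in \(\operatorname{Ran}(A_1)\), which lies in \(\operatorname{Ran}(B)\) by \eqref{eq:range-condition}. The operator \(B\) restricted to \((\ker B)^\perp\subseteq\mathbb R^m\) is a linear bijection onto \(\operatorname{Ran}(B)\). The space \(\operatorname{Ran}(B)\) is finite dimensional, hence closed in \(X\). Let \(B^\dagger\) be the inverse of this restriction composed with the orthogonal projection of \(X\) onto \(\operatorname{Ran}(B)\). Then \(B^\dagger\in\mathcal L(X,\mathbb R^m)\), and \(BB^\dagger y=y\) for every \(y\in\operatorname{Ran}(B)\).

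\textbf{Conclude.} Define \(K_0:=B^\dagger M\in\mathcal L(\mathbb R^p,\mathbb R^m)\). Since \(MCx=A_1x\in\operatorname{Ran}(B)\),
\[
BK_0Cx=BB^\dagger M Cx=MCx=A_1x\qquad\text{for all } x\in X .
\]
Thus \(A_1=BK_0C\), which is (i).

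There is no analytic obstacle here. The only point requiring care is the well-definedness of \(M\) on \(\operatorname{Ran}(C)\), which is exactly where \eqref{eq:kernel-condition} is used, together with the boundedness of the constructed maps. Boundedness holds automatically because the maps either act on finite-dimensional spaces or are projections onto closed finite-dimensional subspaces. An incidental consequence is that \eqref{eq:range-condition} forces \(A_1\) to have finite rank.
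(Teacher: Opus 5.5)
Your proof is correct and follows essentially the same route as the paper. Both factor $A_1$ through $\operatorname{Ran}(C)$ using the kernel condition, then lift through $B$ by inverting its restriction to $(\ker B)^\perp$ using the range condition. The only differences are cosmetic: you extend by zero off $\operatorname{Ran}(C)$ and use a pseudoinverse of $B$ defined on all of $X$, whereas the paper forms $R_B\widetilde A$ on $\operatorname{Ran}(C)$ and then precomposes with the orthogonal projection $P_C$ of $\mathbb R^p$ onto $\operatorname{Ran}(C)$.
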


\begin{proof}

The necessity of the two geometric conditions follows immediately from the factorization
\[
A_{1}=BK_{0}C.
\]

Indeed, let \(x\in\ker(C)\). Since \(Cx=0\),
\[
A_{1}x
=
BK_{0}Cx
=
0,
\]
which proves
\[
\ker(C)\subseteq\ker(A_{1}).
\]

Moreover, for every \(x\in X\),
\[
A_{1}x
=
B(K_{0}Cx),
\]
and therefore
\[
\operatorname{Ran}(A_{1})
\subseteq
\operatorname{Ran}(B).
\]

Conversely, assume that
\[
\ker(C)\subseteq\ker(A_{1})
\quad\text{and}\quad
\operatorname{Ran}(A_{1})
\subseteq
\operatorname{Ran}(B).
\]

The kernel condition allows \(A_{1}\) to factor through the observation operator. Define
\[
\widetilde A:
\operatorname{Ran}(C)
\rightarrow
X
\]
by
\[
\widetilde A(Cx)
=
A_{1}x,
\qquad
x\in X.
\]

This definition is well posed. Indeed, if \(Cx_{1}=Cx_{2}\), then \(x_{1}-x_{2}\in\ker(C)\), and therefore \(A_{1}(x_{1}-x_{2})=0\), which implies
\[
A_{1}x_{1}
=
A_{1}x_{2}.
\]

Hence,
\[
A_{1}
=
\widetilde AC.
\]

Since \(\operatorname{Ran}(C)\subseteq\mathbb R^{p}\) is
finite-dimensional, the operator \(\widetilde A\) is bounded.

The range condition now permits a lifting through the actuator. Let
\[
M_{B}
=
(\ker B)^{\perp}.
\]

The restriction
\[
B_{|M_{B}}
:
M_{B}
\rightarrow
\operatorname{Ran}(B)
\]
is bijective. Since both spaces are finite-dimensional, its inverse
\[
R_{B}
=
(B_{|M_{B}})^{-1}
:
\operatorname{Ran}(B)
\rightarrow
M_{B}
\]
is bounded and satisfies
\[
BR_{B}
=
I_{\operatorname{Ran}(B)}.
\]

Define
\[
\widehat K_{0}
=
R_{B}\widetilde A
:
\operatorname{Ran}(C)
\rightarrow
\mathbb R^{m}.
\]

Then
\[
B\widehat K_{0}
=
\widetilde A.
\]

Finally, let
\[
P_{C}
:
\mathbb R^{p}
\rightarrow
\operatorname{Ran}(C)
\]
denote the orthogonal projection and define
\[
K_{0}
=
\widehat K_{0}P_{C}.
\]

Since \(P_{C}Cx=Cx\) for every \(x\in X\),
\[
BK_{0}Cx
=
BR_{B}\widetilde AP_{C}Cx
=
BR_{B}\widetilde ACx
=
\widetilde ACx
=
A_{1}x.
\]

Therefore,
\[
A_{1}
=
BK_{0}C,
\]
which completes the proof.

\end{proof}

The factorization theorem immediately yields the following structural consequence.

\begin{corollary}[Finite-Rank Structural Obstruction]
\label{cor:finite-rank}

Assume that the compatibility equation
\[
A_{1}=BK_{0}C
\]
admits a solution.

Then
\[
\operatorname{rank}(A_{1})
\le
\min\{m,p\}.
\]

Consequently, every compatible reaction operator has finite rank.

\end{corollary}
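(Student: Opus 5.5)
The plan is to read the rank bound directly off the factorization $A_1=BK_0C$, using the fact that rank is submultiplicative along a composition. Here the middle factor is a matrix between the finite-dimensional spaces $\mathbb R^{p}$ and $\mathbb R^{m}$, so all the rank estimates reduce to elementary finite-dimensional linear algebra. Theorem~\ref{thm:operator-factorization} is not actually needed beyond the existence of some factorization, which is the hypothesis of the corollary.

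The argument proceeds in two steps.

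\textbf{Range inclusion.} For every $x\in X$ we have $A_1x=B(K_0Cx)$, hence
\[
\operatorname{Ran}(A_1)\subseteq\operatorname{Ran}(B)=B(\mathbb R^{m}),
\]
and therefore $\dim\operatorname{Ran}(A_1)\le\dim B(\mathbb R^{m})\le m$. This is the same observation that gave the range condition \eqref{eq:range-condition} in the necessity part of Theorem~\ref{thm:operator-factorization}.

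\textbf{Factoring through $\operatorname{Ran}(C)$.} Write
\[
\operatorname{Ran}(A_1)=BK_0\bigl(\operatorname{Ran}(C)\bigr),
\]
where $\operatorname{Ran}(C)\subseteq\mathbb R^{p}$. A linear map cannot increase dimension, so
\[
\dim\operatorname{Ran}(A_1)\le\dim\operatorname{Ran}(C)\le p.
\]

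Combining the two bounds gives $\operatorname{rank}(A_1)\le\min\{m,p\}$. The final assertion, that every compatible reaction operator has finite rank, then follows at once, since $m$ and $p$ are finite.

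There is no real obstacle. The only point that needs care is that rank is taken to mean $\dim\operatorname{Ran}(A_1)$ for an operator on a possibly infinite-dimensional Hilbert space $X$. With that definition, the dimension inequalities above are just the fact that linear images have dimension at most that of their domain.
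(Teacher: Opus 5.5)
Your proof is correct and takes essentially the same route as the paper, which bounds $\operatorname{rank}(A_1)$ by $\operatorname{rank}(BK_0)\le m$ and by $\operatorname{rank}(K_0C)\le p$ using submultiplicativity of rank along the factorization $A_1=BK_0C$. Your range inclusion $\operatorname{Ran}(A_1)\subseteq\operatorname{Ran}(B)$ and the identity $\operatorname{Ran}(A_1)=BK_0(\operatorname{Ran}(C))$ are the same two inequalities written out at the level of subspaces.
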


\begin{proof}

Since
\[
A_{1}=BK_{0}C,
\]
one has
\[
\operatorname{rank}(A_{1})
\le
\operatorname{rank}(BK_{0})
\le
m,
\]
and
\[
\operatorname{rank}(A_{1})
\le
\operatorname{rank}(K_{0}C)
\le
p.
\]

Hence,
\[
\operatorname{rank}(A_{1})
\le
\min\{m,p\}.
\]

\end{proof}

Corollary~\ref{cor:finite-rank} highlights an intrinsic limitation of finite-dimensional sensing and actuation. In particular, reaction operators of infinite rank cannot satisfy the compatibility equation and therefore cannot give rise to an admissible regular closed-loop
evolution.

The significance of Theorem~\ref{thm:operator-factorization} is that compatibility can be decided entirely from the geometry of the reaction, actuation, and observation operators. The condition \(\ker(C)\subseteq\ker(A_1)\) requires every state component affected by the singular reaction operator to be visible through the observation channel, whereas \(\operatorname{Ran}(A_1)\subseteq\operatorname{Ran}(B)\) requires the corresponding reaction directions to be reachable through the available actuation. These two conditions are jointly necessary and sufficient: if either one fails, the singular reaction term cannot be eliminated, independently of any subsequent stability analysis. Thus, the compatibility question is reduced to an intrinsic geometric test that can be performed before regularization and stabilization are considered.

The next section exploits this characterization to transform the singular evolution equation into an equivalent regular evolution equation, which serves as the basis for the subsequent stability analysis.

\section{Structural Analysis of the Regular Evolution}
\label{sec:regular-evolution}

Once the compatibility condition has been enforced, the singular evolution equation reduces to the regular evolution system
\begin{equation}
\dot z
+
d(t)A_{0}z
+
\left(
A_{r}-BK_{1}C
\right)z
=
0,
\label{eq:regular-evolution}
\end{equation}
where \(d(t)=\frac{\beta(t)}{\alpha(t)}\)
satisfies Assumption~\ref{ass:time}.

The compatibility analysis removes the singular reaction term but does not, by itself, ensure the existence of a feedback operator that uniformly stabilizes the resulting family of evolution equations. The remaining question is therefore purely dynamical: does there exist a bounded operator \(K_{1}\) that uniformly stabilizes \eqref{eq:regular-evolution} for every admissible parameter trajectory?

This section addresses this question by identifying the intrinsic geometric and spectral obstructions to uniform stabilization. These necessary conditions provide the foundation for the constructive
stability certificate developed in the next section.

\subsection{Common Invisible Dynamics}
\label{subsec:necessary-conditions}

The regular evolution equation
\eqref{eq:regular-evolution} defines a family of evolution equations indexed by the admissible coefficient
\[
d(t)\in[d_-,d_+].
\]

Uniform stabilization of this family requires that every state component be influenced, directly or indirectly, by the available control and observation channels. Any subspace that remains simultaneously invariant under the regular dynamics and invisible to the measurements evolves independently of the control action and therefore constitutes an intrinsic obstruction to uniform stabilization.

The corresponding invariant structure is formalized below.

\begin{definition}[Common Invisible Invariant Subspace]
\label{def:common-invisible}

A closed subspace
\[
\mathcal V\subset X
\]
is called a \emph{common invisible invariant subspace} if

\begin{enumerate}

\item
\[
\mathcal V
\subseteq
\ker(C),
\]

\item
\[
A_r\mathcal V
\subseteq
\mathcal V,
\]

\item
\[
A_0(\mathcal V\cap D(A_0))
\subseteq
\mathcal V.
\]

\end{enumerate}

\end{definition}

A common invisible invariant subspace is therefore invariant under the entire regular evolution family while remaining undetectable through the available measurements. Consequently, the corresponding state components cannot be influenced through the available control and observation channels.

The common invisible dynamics introduced above therefore persist regardless of the feedback action.

\begin{theorem}[Persistence of Common Invisible Dynamics]
\label{thm:persistence}

Let
\[
\mathcal V\subset X
\]
be a common invisible invariant subspace in the sense of Definition~\ref{def:common-invisible}.

Then, for every bounded operator
\[
K_1\in\mathcal L(\mathbb R^p,\mathbb R^m),
\]
the restriction of the closed-loop evolution
\[
\dot z
+
d(t)A_0z
+
(A_r-BK_1C)z
=
0
\]
to
\(
\mathcal V
\)
coincides with the restriction of the corresponding open-loop evolution
\[
\dot z
+
d(t)A_0z
+
A_rz
=
0.
\]

Consequently, the dynamics on \(\mathcal V\) are unaffected by the feedback operator.

\end{theorem}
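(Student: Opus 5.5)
The argument rests on the operator identity
\[
(A_r-BK_1C)v=A_rv,\qquad v\in\mathcal V,
\]
together with the invariance of \(\mathcal V\) under the open-loop evolution. Fix \(K_1\in\mathcal L(\mathbb R^p,\mathbb R^m)\). Since \(\mathcal V\subseteq\ker(C)\), every \(v\in\mathcal V\) satisfies \(Cv=0\), and therefore \(BK_1Cv=0\). This gives the identity above. Consequently, on \(\mathcal V\cap D(A_0)\) the closed-loop generator family \(-(d(t)A_0+A_r-BK_1C)\) agrees pointwise with the open-loop family \(-(d(t)A_0+A_r)\). The remaining work is to turn this pointwise agreement of operators into agreement of the evolutions restricted to \(\mathcal V\).

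First, I will show that \(\mathcal V\) is invariant under the open-loop evolution. The plan is to decompose \(X=\mathcal V\oplus\mathcal V^\perp\). Since \(A_0\) is self-adjoint and maps \(\mathcal V\cap D(A_0)\) into \(\mathcal V\), I will argue via the compact resolvent of Assumption~\ref{ass:A0} that \(\mathcal V\) reduces \(A_0\). Concretely, I will check that \((A_0+\lambda)^{-1}\mathcal V\subseteq\mathcal V\) for large \(\lambda\), so that \(\mathcal V\) is spanned by eigenvectors of \(A_0\). It then follows that \(A_0|_{\mathcal V}\), with domain \(\mathcal V\cap D(A_0)\), is self-adjoint and positive on \(\mathcal V\). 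Combined with \(A_r\mathcal V\subseteq\mathcal V\) and \(A_r\in\mathcal L(X)\), this lets me consider the reduced problem
\[
\dot v+d(t)A_0|_{\mathcal V}v+A_r|_{\mathcal V}v=0
\]
on the Hilbert space \(\mathcal V\). By Assumption~\ref{ass:time}, \(d\) is bounded above and below, so this problem is well posed as a bounded perturbation of a parabolic family with constant domain.

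Second, I will use uniqueness. For \(v_0\in\mathcal V\), the solution \(v\) of the reduced problem lies in \(\mathcal V\) and satisfies \(Cv(t)=0\). By the identity \((A_r-BK_1C)v=A_rv\), it therefore solves both the open-loop equation and the closed-loop equation in \(X\). Uniqueness of solutions to each of these well-posed regular evolution equations then forces the closed-loop and open-loop evolutions issued from \(v_0\) both to equal \(v\). In other words, both evolutions leave \(\mathcal V\) invariant and coincide there, independently of \(K_1\).

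The main obstacle is the invariance step. Definition~\ref{def:common-invisible} only gives \(A_0(\mathcal V\cap D(A_0))\subseteq\mathcal V\), and this does not guarantee that \(\mathcal V\cap D(A_0)\) is dense in \(\mathcal V\) or that the resolvent preserves \(\mathcal V\). If the reduction argument fails, I would fall back on the weaker statement. That statement holds whenever the evolution starting in \(\mathcal V\) stays in \(\mathcal V\), which the definition implicitly intends. Under that invariance hypothesis, the first paragraph's identity alone proves the theorem.
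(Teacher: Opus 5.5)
\paragraph{The reduction step fails.} Your primary route does not go through, and the obstacle you flag is not merely technical. The reduction step is false under Definition~\ref{def:common-invisible} as written, even when \(\mathcal V\cap D(A_0)\) is dense in \(\mathcal V\).

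Here is a counterexample in the paper's model:
\begin{itemize}
\item Take \(X=L^2(0,1)\), \(A_0=-a^2\partial_{xx}\) with Dirichlet conditions, \(A_r=0\) and \(d\equiv 1\).
\item Let \(\mathcal V=\{f\in X: f=0 \text{ a.e. on } (1/2,1)\}\).
\item Let \(Cz=\langle z,c\rangle_X\) with \(0\le c\neq 0\) supported in \((1/2,1)\).
\end{itemize}
Conditions 1 and 2 are immediate. Condition 3 holds because an \(H^2\) function vanishing on \((1/2,1)\) has \(f''=0\) a.e. there. Moreover, \(C_c^\infty(0,1/2)\subset \mathcal V\cap D(A_0)\) is dense in \(\mathcal V\).

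Yet \((A_0+\lambda)^{-1}\) and \(e^{-tA_0}\) map every nonnegative nonzero \(z_0\in\mathcal V\) to a strictly positive function on \((0,1)\), by the strong maximum principle. So the open-loop trajectory leaves \(\mathcal V\) instantly and \(Cz(t)>0\) for \(t>0\). Now take \(B=c\) and \(K_1\neq 0\). The closed-loop trajectory from the same \(z_0\) must differ from the open-loop one; otherwise subtracting the two equations would force \(K_1\langle z(t),c\rangle_X\,c=0\).

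The underlying reason is that \(A_0\) restricted to \(\mathcal V\cap D(A_0)\) is only symmetric in \(\mathcal V\), not self-adjoint. It carries the overdetermined conditions \(f(1/2)=f'(1/2)=0\). Hence neither the compact resolvent nor density can give \((A_0+\lambda)^{-1}\mathcal V\subseteq\mathcal V\). A cruder example also works: a one-dimensional \(\mathcal V=\operatorname{span}\{v\}\) with \(v\notin D(A_0)\), for which condition 3 is vacuous.

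\paragraph{What is needed, and how this compares with the paper.} The missing ingredient cannot be proved from the definition; it has to be assumed. One natural fix is to strengthen condition 3 to ``\(\mathcal V\) reduces \(A_0\)''. Given the compact resolvent, this is equivalent to \(\mathcal V\) being the closed span of eigenvectors of \(A_0\). Under that hypothesis, your reduced problem on \(\mathcal V\), combined with uniqueness for the open- and closed-loop equations, is a complete and correct proof.

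The paper's proof does not resolve this either. It simply asserts that \(\mathcal V\) ``is invariant under the regular dynamics'', deduces \(Cz(t)=0\), and concludes. That is exactly your fallback, so your fallback coincides with the paper's argument. The gap you identified is therefore a gap in the theorem as stated, not only in your proposal. Your version is, if anything, more careful: it says which evolution is assumed invariant and uses uniqueness to identify the two solutions.
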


\begin{proof}

Let \(z_0\in\mathcal V\), and let \(z(t)\) denote the corresponding solution with \(z(0)=z_0\).

By Definition~\ref{def:common-invisible}, \(\mathcal V\subseteq\ker(C)\). Since \(\mathcal V\) is invariant under the regular dynamics, one has
\[
z(t)\in\mathcal V,
\qquad
t\ge0.
\]
Consequently,
\[
Cz(t)=0,
\qquad
t\ge0.
\]

Hence, the restrictions of the closed-loop and open-loop evolutions to \(\mathcal V\) coincide.

\end{proof}

The persistence result immediately yields the following obstruction to uniform stabilization.

\begin{corollary}[Dynamic Obstruction to Uniform Stabilization]
\label{cor:dynamic-obstruction}

Assume that there exists a nontrivial common invisible invariant subspace
\[
\mathcal V\subset X.
\]

If the open-loop dynamics restricted to \(\mathcal V\) is not uniformly exponentially stable, then no bounded operator
\[
K_{1}\in\mathcal L(\mathbb R^{p},\mathbb R^{m})
\]
can uniformly exponentially stabilize the family
\[
\dot z
+
d(t)A_{0}z
+
(A_{r}-BK_{1}C)z
=
0,
\qquad
d(t)\in[d_-,d_+],
\]
for every admissible parameter trajectory.

\end{corollary}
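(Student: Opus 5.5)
The plan is a proof by contradiction built on Theorem~\ref{thm:persistence}. Suppose some bounded $K_1\in\mathcal L(\mathbb R^p,\mathbb R^m)$ makes the family
\[
\dot z + d(t)A_0 z + (A_r - BK_1C)z = 0
\]
uniformly exponentially stable for every admissible trajectory $d(\cdot)$ with values in $[d_-,d_+]$. This means there are constants $M\ge1$ and $\omega>0$, independent of the trajectory, such that the closed-loop evolution family $U_{K_1}(t,s)$ satisfies
\[
\|U_{K_1}(t,s)\|\le Me^{-\omega(t-s)}, \qquad t\ge s.
\]

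The first step is to restrict attention to initial data in $\mathcal V$. For $z_0\in\mathcal V$ and a given trajectory, Theorem~\ref{thm:persistence} says the closed-loop trajectory stays in $\mathcal V\subseteq\ker(C)$. The feedback term $BK_1Cz(t)$ therefore vanishes identically, so $U_{K_1}(t,s)z_0=U_{0}(t,s)z_0$, where $U_0$ is the open-loop evolution family. This gives
\[
\|U_0(t,s)|_{\mathcal V}\|\le Me^{-\omega(t-s)}
\]
uniformly over admissible trajectories. In other words, the open-loop dynamics restricted to $\mathcal V$ would be uniformly exponentially stable, which contradicts the hypothesis. Since $\mathcal V$ is nontrivial, this restriction is a genuine statement and not vacuous.

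The main obstacle is justifying the invariance used in Theorem~\ref{thm:persistence}, namely that solutions starting in $\mathcal V$ remain in $\mathcal V$. Definition~\ref{def:common-invisible} only gives algebraic invariance: $A_r\mathcal V\subseteq\mathcal V$ and $A_0(\mathcal V\cap D(A_0))\subseteq\mathcal V$. To upgrade this to invariance under the evolution, I would use that $A_0$ is self-adjoint with compact resolvent (Assumption~\ref{ass:A0}). Then $\mathcal V$ reduces $A_0$, so the orthogonal projection $P_{\mathcal V}$ commutes with $A_0$ and with each semigroup $e^{-sd A_0}$. The time-dependent principal part $d(t)A_0$ generates the explicit evolution family $e^{-(\int_s^t d)A_0}$, which therefore leaves $\mathcal V$ invariant. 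The bounded perturbation $A_r-BK_1C$ acts on $\mathcal V$ as $A_r$, which maps $\mathcal V$ into itself. Solving the variation-of-constants equation by Picard iteration inside the closed subspace $\mathcal V$ then gives a mild solution in $\mathcal V$, and by uniqueness it is the solution. With invariance established, the contradiction in the previous paragraph completes the proof.

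If the reduction step is delicate, the fallback is to take the hypothesis ``open-loop dynamics restricted to $\mathcal V$'' as already presupposing that $\mathcal V$ is invariant under the open-loop evolution. Under that reading, the contradiction argument above goes through directly.
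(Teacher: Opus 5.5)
Your second paragraph is the paper's proof. The paper also invokes Theorem~\ref{thm:persistence} to identify the closed-loop and open-loop evolutions on $\mathcal V$. A trajectory-independent decay estimate for the closed loop then becomes one for the open loop on $\mathcal V$; you only phrase this as a contradiction.

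The step that fails is the justification of invariance in your third paragraph. The claim that $\mathcal V$ reduces $A_0$ does not follow from Definition~\ref{def:common-invisible} and Assumption~\ref{ass:A0}. The condition $A_0(\mathcal V\cap D(A_0))\subseteq\mathcal V$ says nothing about the elements of $\mathcal V$ outside $D(A_0)$. Here is a counterexample:
\begin{itemize}
\item Take $A_r=0$, as in Subsection~\ref{subsec:parabolic-model}, and pick $v\in\ker C\setminus D(A_0)$.
\item Such a $v$ exists. $\ker C$ is closed of finite codimension, so $\ker C\subseteq D(A_0)$ would make $D(A_0)$ closed and dense, hence equal to $X$. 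This contradicts the unboundedness of $A_0$.
\item Then $\mathcal V=\operatorname{span}\{v\}$ satisfies all three conditions of the definition, trivially, since $\mathcal V\cap D(A_0)=\{0\}$.
\item Yet for every $t>0$ the vector $e^{-tA_0}v$ is nonzero and belongs to $D(A_0)$, so it does not lie in $\mathcal V$. The same holds for $e^{-(\int_s^t d)A_0}$.
\end{itemize}
So $P_{\mathcal V}$ need not commute with the semigroup, and your Picard iteration cannot be confined to $\mathcal V$.

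What your argument actually needs is invariance of $\mathcal V$ under the resolvent of $A_0$. Given the algebraic condition, this is equivalent to $P_{\mathcal V}D(A_0)\subseteq D(A_0)$. For self-adjoint $A_0$ with compact resolvent, this makes $\mathcal V$ the closed span of the eigenfunctions it contains, so $\mathcal V$ reduces $A_0$. Your variation-of-constants argument then goes through as written.

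The paper does not prove invariance either. The proof of Theorem~\ref{thm:persistence} simply asserts that $\mathcal V$ is invariant under the regular dynamics. That is exactly your fallback reading, and it is the only reading under which ``the open-loop dynamics restricted to $\mathcal V$'' makes sense. Keep the fallback, and either drop the reduction argument or replace it with the stronger hypothesis above.
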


\begin{proof}

By Theorem~\ref{thm:persistence}, the closed-loop evolution restricted to \(\mathcal V\) coincides with the corresponding open-loop evolution.

Therefore, the feedback operator cannot modify the dynamics on \(\mathcal V\).
If the open-loop evolution on \(\mathcal V\) fails to be uniformly exponentially stable, then the same holds for the closed-loop evolution. Consequently, uniform exponential stabilization of the entire evolution family is impossible.

\end{proof}

Theorem~\ref{thm:persistence} and Corollary~\ref{cor:dynamic-obstruction} identify an obstruction that is structural rather than quantitative. No modification of stability margins or increase in control authority can compensate for a state component that remains simultaneously invariant and invisible. Such a component evolves according to its intrinsic dynamics, independently of control actions relying on the available measurements. Consequently, uniform stabilization requires every common invisible invariant subspace to be uniformly exponentially stable on its own. This necessary condition can therefore be verified before any stability certificate or constructive design is considered.

\subsection{Spectral Necessary Conditions}
\label{subsec:spectral-obstructions}

The geometric obstruction identified above admits a complementary spectral interpretation. Since every constant parameter trajectory
\[
d(t)\equiv d,
\qquad
d\in[d_-,d_+],
\]
is admissible, uniform exponential stabilization of the non-autonomous family necessarily implies exponential stability of every corresponding frozen closed-loop system.

For each
\[
d\in[d_-,d_+],
\]
define the open-loop operator
\begin{equation}
\mathcal A(d)
:=
-dA_0-A_r,
\qquad
D(\mathcal A(d))
=
D(A_0),
\label{eq:frozen-open-loop-operator}
\end{equation}
and, for a bounded feedback operator
\[
K_1\in\mathcal L(\mathbb R^p,\mathbb R^m),
\]
define the associated closed-loop operator
\begin{equation}
\mathcal A_{K_1}(d)
=
\mathcal A(d)+BK_1C
=
-dA_0-A_r+BK_1C.
\label{eq:frozen-closed-loop-operator}
\end{equation}

Since
\[
BK_1C\in\mathcal L(X),
\]
both operators have the common domain
\(
D(A_0).
\)
Throughout this subsection, all spectral statements are understood in the complexification of \(X\) whenever \(X\) is a real Hilbert space.

This observation leads to necessary spectral conditions for the entire frozen operator family.

\begin{theorem}[Spectral Necessary Conditions for Uniform Stabilization]
\label{thm:frozen-family-hautus}

Assume Assumptions~\ref{ass:time} and \ref{ass:A0}.
Suppose that there exist
\[
K_1\in\mathcal L(\mathbb R^p,\mathbb R^m),
\qquad
M\ge1,
\qquad
\omega>0,
\]
such that, for every admissible measurable trajectory
\[
d(\cdot):[0,\infty)\rightarrow[d_-,d_+],
\]
the evolution family associated with
\begin{equation}
\dot z(t)
=
\left(
\mathcal A(d(t))+BK_1C
\right)z(t)
\label{eq:nonautonomous-frozen-hautus}
\end{equation}
satisfies
\begin{equation}
\left\|
\mathcal U_{K_1,d}(t,s)
\right\|_{\mathcal L(X)}
\le
Me^{-\omega(t-s)},
\qquad
0\le s\le t,
\label{eq:uniform-stability-hautus}
\end{equation}
where the constants
\(M\)
and
\(\omega\)
are independent of
\(d(\cdot)\).

Then, for every
\[
d\in[d_-,d_+]
\]
and every
\[
\lambda\in\mathbb C,
\qquad
\operatorname{Re}\lambda\ge0,
\]
the following conditions hold:
\begin{equation}
\ker\left(
\lambda I-\mathcal A(d)
\right)
\cap
\ker C
=
\{0\},
\label{eq:frozen-output-hautus}
\end{equation}
and
\begin{equation}
\ker\left(
\overline{\lambda}I-\mathcal A(d)^*
\right)
\cap
\ker B^*
=
\{0\}.
\label{eq:frozen-input-hautus}
\end{equation}

\end{theorem}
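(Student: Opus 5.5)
Fix $d\in[d_-,d_+]$ and take the constant trajectory $d(t)\equiv d$. Then the evolution family in \eqref{eq:uniform-stability-hautus} is the $C_0$-semigroup $T_{K_1,d}(t)=e^{t\mathcal A_{K_1}(d)}$ generated by $\mathcal A_{K_1}(d)=\mathcal A(d)+BK_1C$. This is a generator: $-dA_0$ is self-adjoint and negative by Assumption~\ref{ass:A0}, and $-A_r+BK_1C$ is a bounded perturbation. The uniform estimate then gives $\|T_{K_1,d}(t)\|\le Me^{-\omega t}$. Consequently every $\lambda$ with $\operatorname{Re}\lambda>-\omega$, and in particular every $\lambda$ with $\operatorname{Re}\lambda\ge0$, lies in the resolvent set of $\mathcal A_{K_1}(d)$. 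Since adjoint semigroups on a Hilbert space have the same norm, the same holds for $\mathcal A_{K_1}(d)^*$ at $\overline\lambda$. All of this is carried out in the complexification of $X$, as the paper stipulates.

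For the output condition \eqref{eq:frozen-output-hautus}, suppose $v\in\ker(\lambda I-\mathcal A(d))\cap\ker C$ with $\operatorname{Re}\lambda\ge0$. Then $v\in D(A_0)$ and $Cv=0$, so
\[
\mathcal A_{K_1}(d)v=\mathcal A(d)v+BK_1Cv=\lambda v .
\]
Hence $\lambda$ is an eigenvalue of $\mathcal A_{K_1}(d)$ unless $v=0$. This contradicts the resolvent statement above, so $v=0$. Equivalently, one can argue directly that $T_{K_1,d}(t)v=e^{\lambda t}v$, and $|e^{\lambda t}|\ge1$ is incompatible with the bound $Me^{-\omega t}$ as $t\to\infty$ unless $v=0$.

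For the input condition \eqref{eq:frozen-input-hautus}, pass to adjoints. Since $BK_1C$ is bounded, $\mathcal A_{K_1}(d)^*=\mathcal A(d)^*+C^*K_1^*B^*$ with domain $D(\mathcal A(d)^*)$. Here $\mathcal A(d)^*=-dA_0-A_r^*$ has domain $D(A_0)$ by self-adjointness of $A_0$ and boundedness of $A_r$. If $w\in\ker(\overline\lambda I-\mathcal A(d)^*)\cap\ker B^*$, then $\mathcal A_{K_1}(d)^*w=\overline\lambda w$. The adjoint semigroup $T_{K_1,d}(t)^*$ is generated by $\mathcal A_{K_1}(d)^*$ and obeys the same bound $Me^{-\omega t}$. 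The same eigenvalue argument therefore forces $w=0$.

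The only delicate point is justifying the identification of the adjoint generator, $(\mathcal A(d)+BK_1C)^*=\mathcal A(d)^*+C^*K_1^*B^*$. This is the standard fact that the adjoint of a closed densely defined operator plus a bounded operator equals the sum of the adjoints. A second point is making sure the constant trajectory is admissible on $[0,\infty)$, so that the semigroup statement follows from \eqref{eq:uniform-stability-hautus}. The finite-horizon $T$ of earlier sections plays no role here, because the theorem explicitly quantifies over trajectories on $[0,\infty)$. Everything else is an eigenvector computation.
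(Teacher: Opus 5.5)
Your proposal is correct and follows essentially the same route as the paper. You restrict to the constant trajectory to get $\|T_{K_1,d}(t)\|\le Me^{-\omega t}$, note that an eigenvector of $\mathcal A(d)$ in $\ker C$ (resp.\ of $\mathcal A(d)^*$ in $\ker B^*$) is an eigenvector of $\mathcal A_{K_1}(d)$ (resp.\ of $\mathcal A_{K_1}(d)^*=\mathcal A(d)^*+C^*K_1^*B^*$) with eigenvalue in the closed right half-plane, and contradict the decay of $T_{K_1,d}(t)$ or $T_{K_1,d}(t)^*$; the resolvent-set phrasing is just a repackaging of this argument, and your added justifications (generation via bounded perturbation, identification of the adjoint generator) make explicit what the paper uses implicitly.
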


\begin{proof}

Fix \(d\in[d_-,d_+]\). Since the constant trajectory \(d(t)\equiv d\) is admissible, estimate \eqref{eq:uniform-stability-hautus} implies that the \(C_0\)-semigroup
\[
T_{K_1,d}(t)
\]
generated by
\[
\mathcal A_{K_1}(d)
=
\mathcal A(d)+BK_1C
\]
satisfies
\begin{equation}
\left\|
T_{K_1,d}(t)
\right\|_{\mathcal L(X)}
\le
Me^{-\omega t},
\qquad
t\ge0.
\label{eq:frozen-semigroup-estimate}
\end{equation}

To establish \eqref{eq:frozen-output-hautus}, suppose, by contradiction, that there exist \(\lambda\in\mathbb C\) with
\(\operatorname{Re}\lambda\ge0\),
and
\(\phi\in D(A_0)\setminus\{0\}\)
such that
\[
\mathcal A(d)\phi
=
\lambda\phi,
\qquad
C\phi=0.
\]

Since \(C\phi=0\), one has \(BK_1C\phi=0\), and therefore
\[
\mathcal A_{K_1}(d)\phi
=
\lambda\phi.
\]
Consequently,
\[
T_{K_1,d}(t)\phi
=
e^{\lambda t}\phi,
\qquad
t\ge0.
\]

Combining this identity with
\eqref{eq:frozen-semigroup-estimate}
gives
\[
e^{\operatorname{Re}\lambda t}
\|\phi\|_X
\le
Me^{-\omega t}
\|\phi\|_X,
\qquad
t\ge0.
\]

Since \(\operatorname{Re}\lambda\ge0\)
and
\(\phi\neq0\), this contradicts the exponential decay estimate as \(t\to\infty\). Hence,
\[
\ker(\lambda I-\mathcal A(d))
\cap
\ker C
=
\{0\}.
\]

It remains to prove \eqref{eq:frozen-input-hautus}.
Assume, again by contradiction, that there exist \(\lambda\in\mathbb C\) with
\(\operatorname{Re}\lambda\ge0\),
and \(\psi\in D(\mathcal A(d)^*)\setminus\{0\}\)
such that
\[
\mathcal A(d)^*\psi
=
\overline{\lambda}\psi,
\qquad
B^*\psi=0.
\]

Since \(BK_1C\in\mathcal L(X)\),
the adjoint closed-loop operator satisfies
\[
\mathcal A_{K_1}(d)^*
=
\mathcal A(d)^*
+
C^*K_1^*B^*.
\]
Hence,
\(B^*\psi=0\)
implies
\[
\mathcal A_{K_1}(d)^*\psi
=
\overline{\lambda}\psi.
\]
Consequently,
\[
T_{K_1,d}(t)^*\psi
=
e^{\overline{\lambda}t}\psi,
\qquad
t\ge0.
\]

Since
\[
\|T_{K_1,d}(t)^*\|_{\mathcal L(X)}
=
\|T_{K_1,d}(t)\|_{\mathcal L(X)}
\le
Me^{-\omega t},
\]
one obtains
\[
e^{\operatorname{Re}\lambda t}
\|\psi\|_X
\le
Me^{-\omega t}
\|\psi\|_X,
\qquad
t\ge0.
\]

This contradicts
\(\operatorname{Re}\lambda\ge0\)
and
\(\psi\neq0\).
Hence,
\[
\ker(\overline{\lambda}I-\mathcal A(d)^*)
\cap
\ker B^*
=
\{0\}.
\]

Since \(d\in[d_-,d_+]\) was arbitrary, both Hautus conditions hold for every member of the frozen operator family.

\end{proof}

Theorem~\ref{thm:frozen-family-hautus} shows that uniform stabilization of the non-autonomous family imposes a spectral requirement at every admissible frozen value of the parameter. No eigenmode associated with the closed right half-plane may remain invisible to the observation operator or inaccessible through the actuation operator. The two Hautus conditions therefore provide mode-by-mode necessary tests that must hold uniformly across the entire parameter interval. Failure at a single frozen value is sufficient to rule out uniform exponential stabilization of the non-autonomous family.

The geometric and spectral conditions obtained in this section delimit the class of regularized systems for which uniform stabilization can be pursued, but they do not establish stability by themselves. The next section turns from these necessary conditions to a constructive critical--residual certificate providing sufficient conditions for uniform exponential stability.

\section{Uniform Stability Certification}
\label{sec:critical-residual}

This section establishes a constructive certificate for the uniform exponential stability of the regular evolution family introduced in Section~\ref{sec:regular-evolution}. The analysis combines a
critical--residual decomposition, quantitative estimates of the residual dynamics and spillover couplings, and a common quadratic certificate for the critical subsystem.

Throughout this section, set
\[
L_K:=A_r-BK_1C.
\]
The regular evolution equation
\eqref{eq:regular-evolution}
then reads
\[
\dot z(t)
+
d(t)A_0z(t)
+
L_Kz(t)
=
0,
\]
where
\[
d(\cdot):[0,\infty)\rightarrow[d_-,d_+]
\]
is an arbitrary measurable parameter trajectory.

\subsection{Critical--Residual Decomposition}
\label{subsec:critical-decomposition}

The stability analysis relies on a spectral decomposition separating a finite-dimensional critical subspace from an infinite-dimensional residual subspace.

By Assumption~\ref{ass:A0}, the operator \(A_0\) admits an orthonormal basis of eigenfunctions
\[
\{\phi_j\}_{j\ge1}\subset X
\]
satisfying
\[
A_0\phi_j=\lambda_j\phi_j,
\qquad
0<\lambda_1\le\lambda_2\le\cdots,
\qquad
\lambda_j\rightarrow+\infty.
\]

For a fixed integer \(N\ge1\), define
\begin{equation}
X_c
:=
\operatorname{span}\{\phi_1,\ldots,\phi_N\},
\qquad
X_s
:=
X_c^\perp,
\label{eq:critical-residual-spaces}
\end{equation}
and let
\[
P_N:X\rightarrow X_c,
\qquad
Q_N:=I-P_N:X\rightarrow X_s
\]
be the corresponding orthogonal projections. Every state admits the
decomposition
\[
z=z_c+z_s,
\qquad
z_c=P_Nz,
\qquad
z_s=Q_Nz.
\]

Define the restrictions of the principal operator by
\[
A_{0,c}
:=
A_0|_{X_c},
\qquad
A_{0,s}
:=
A_0|_{D(A_0)\cap X_s}.
\]
Since \(X_c\) and \(X_s\) are spectral subspaces of \(A_0\), both are under the corresponding restrictions.

Projecting the evolution equation
\eqref{eq:regular-evolution}
onto \(X_c\) and \(X_s\) yields
\begin{equation}
\begin{aligned}
\dot z_c
&=
G_{cc}(d(t))z_c
+
G_{cs}z_s,
\\
\dot z_s
&=
G_{sc}z_c
+
G_{ss}(d(t))z_s,
\end{aligned}
\label{eq:critical-residual-block-system}
\end{equation}
where
\begin{equation}
G_{cc}(d)
:=
-dA_{0,c}
-
P_NL_KP_N,
\label{eq:Gcc-definition}
\end{equation}
\begin{equation}
G_{ss}(d)
:=
-dA_{0,s}
-
Q_NL_KQ_N,
\label{eq:Gss-definition}
\end{equation}
and
\begin{equation}
G_{cs}
:=
-P_NL_KQ_N,
\qquad
G_{sc}
:=
-Q_NL_KP_N.
\label{eq:coupling-blocks}
\end{equation}

The diagonal operators describe the internal critical and residual
dynamics, whereas \(G_{cs}\) and \(G_{sc}\) quantify the coupling between the two spectral components.

\subsection{Residual Dissipation and Spillover Analysis}
\label{subsec:residual-analysis}

The residual subsystem benefits from the increasing spectral coercivity of \(A_0\), while its coupling with the critical modes is generated by the off-diagonal blocks of \eqref{eq:critical-residual-block-system}.
This subsection quantifies both effects.

For every
\[
x_s\in D(A_0)\cap X_s,
\]
the spectral decomposition of \(A_0\) yields
\begin{equation}
\langle A_0x_s,x_s\rangle_X
\ge
\lambda_{N+1}\|x_s\|_X^2.
\label{eq:residual-spectral-coercivity}
\end{equation}

The potentially antidissipative contribution of the bounded operator
\(L_K\) on \(X_s\) is measured by
\begin{equation}
\ell_s(K_1)
:=
\sup_{\substack{x_s\in X_s\\ \|x_s\|_X=1}}
\operatorname{Re}
\left\langle
-Q_NL_KQ_Nx_s,x_s
\right\rangle_X.
\label{eq:ell-s-definition}
\end{equation}
This quantity is finite and satisfies
\begin{equation}
\ell_s(K_1)
\le
\left\|
\frac{
-Q_NL_KQ_N-Q_NL_K^*Q_N
}{2}
\right\|_{\mathcal L(X_s)}
\le
\|Q_NL_KQ_N\|_{\mathcal L(X_s)}.
\label{eq:ell-s-upper-bound}
\end{equation}

The next result provides a uniform dissipation estimate for the residual dynamics.

\begin{lemma}[Residual Dissipation Margin]
\label{lem:uniform-high-frequency-dissipation}

Assume that Assumptions~\ref{ass:time} and~\ref{ass:A0} hold. For a fixed integer \(N\ge1\), define
\begin{equation}
\mu_s
:=
d_-\lambda_{N+1}
-
\ell_s(K_1).
\label{eq:residual-margin}
\end{equation}

Then, for every
\[
d\in[d_-,d_+]
\]
and every
\[
x_s\in D(A_0)\cap X_s,
\]
one has
\begin{equation}
\operatorname{Re}
\left\langle
G_{ss}(d)x_s,x_s
\right\rangle_X
\le
-\mu_s\|x_s\|_X^2.
\label{eq:uniform-residual-dissipation}
\end{equation}

In particular, if
\begin{equation}
d_-\lambda_{N+1}
>
\ell_s(K_1),
\label{eq:positive-residual-margin}
\end{equation}
then the residual dynamics are uniformly dissipative over \(d\in[d_-,d_+]\).

\end{lemma}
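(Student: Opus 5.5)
The plan is a direct quadratic-form computation that splits $G_{ss}(d)$ into its principal and bounded parts. Fix $d\in[d_-,d_+]$ and $x_s\in D(A_0)\cap X_s$. By \eqref{eq:Gss-definition},
\[
\operatorname{Re}\langle G_{ss}(d)x_s,x_s\rangle_X
=
-d\,\operatorname{Re}\langle A_{0,s}x_s,x_s\rangle_X
+
\operatorname{Re}\langle -Q_NL_KQ_Nx_s,x_s\rangle_X .
\]
Each term will be estimated separately and the two bounds added.

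For the principal term, note first that $A_{0,s}x_s=A_0x_s$. Since $A_0$ is self-adjoint (Assumption~\ref{ass:A0}), $\langle A_0x_s,x_s\rangle_X$ is real. The spectral coercivity \eqref{eq:residual-spectral-coercivity} gives $\langle A_0x_s,x_s\rangle_X\ge\lambda_{N+1}\|x_s\|_X^2$. This follows by expanding $x_s=\sum_{j>N}\langle x_s,\phi_j\rangle\phi_j$ and noting $\langle A_0x_s,x_s\rangle=\sum_{j>N}\lambda_j|\langle x_s,\phi_j\rangle|^2$; the expansion is legitimate because $x_s\in D(A_0)$ and $x_s\perp\phi_1,\dots,\phi_N$. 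The lower bound $\lambda_{N+1}\|x_s\|^2\ge 0$ is nonnegative, since $A_0$ is strictly positive. Together with $d\ge d_->0$ from Assumption~\ref{ass:time}, this yields
\[
-d\,\langle A_0x_s,x_s\rangle_X\le -d_-\lambda_{N+1}\|x_s\|_X^2 .
\]
This holds uniformly in $d$.

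For the bounded term, the case $x_s=0$ is trivial. Otherwise, normalize $x_s/\|x_s\|_X$ and use the homogeneity of degree two of the quadratic form together with the definition \eqref{eq:ell-s-definition}. This gives
\[
\operatorname{Re}\langle -Q_NL_KQ_Nx_s,x_s\rangle_X\le \ell_s(K_1)\|x_s\|_X^2 .
\]
The supremum defining $\ell_s(K_1)$ is finite by \eqref{eq:ell-s-upper-bound}, since $L_K\in\mathcal L(X)$.

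Adding the two bounds gives \eqref{eq:uniform-residual-dissipation} with $\mu_s=d_-\lambda_{N+1}-\ell_s(K_1)$. The final assertion follows because \eqref{eq:positive-residual-margin} means $\mu_s>0$, so $G_{ss}(d)$ is strictly dissipative with a margin independent of $d$.

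There is no real obstacle. The only points needing care are the realness and nonnegativity of $\langle A_0x_s,x_s\rangle_X$, which justify replacing $d$ by $d_-$, and, if $X$ is real, working in the complexification consistently with the real part in \eqref{eq:ell-s-definition}.
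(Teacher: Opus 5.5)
Your proposal is correct and follows the paper's proof essentially line by line. You split $\operatorname{Re}\langle G_{ss}(d)x_s,x_s\rangle_X$ into the principal part, bounded via \eqref{eq:residual-spectral-coercivity} together with $d\ge d_-$, and the bounded part, bounded via the definition of $\ell_s(K_1)$, and then add the two bounds; your extra remarks (realness and nonnegativity of $\langle A_0x_s,x_s\rangle_X$ to justify replacing $d$ by $d_-$, and the homogeneity argument for $\ell_s(K_1)$) only make explicit steps the paper leaves implicit.
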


\begin{proof}

Let \(d\in[d_-,d_+]\) and
\(x_s\in D(A_0)\cap X_s\).
From \eqref{eq:Gss-definition},
\[
G_{ss}(d)x_s
=
-dA_{0,s}x_s
-
Q_NL_KQ_Nx_s.
\]
Hence,
\[
\begin{aligned}
\operatorname{Re}
\langle
G_{ss}(d)x_s,x_s
\rangle_X
={}&
-d
\langle
A_0x_s,x_s
\rangle_X
\\
&+
\operatorname{Re}
\langle
-Q_NL_KQ_Nx_s,x_s
\rangle_X.
\end{aligned}
\]

Since \(d\ge d_-\), estimate
\eqref{eq:residual-spectral-coercivity} gives
\[
-d
\langle
A_0x_s,x_s
\rangle_X
\le
-d_-\lambda_{N+1}
\|x_s\|_X^2.
\]

Moreover, \eqref{eq:ell-s-definition} implies
\[
\operatorname{Re}
\langle
-Q_NL_KQ_Nx_s,x_s
\rangle_X
\le
\ell_s(K_1)
\|x_s\|_X^2.
\]

Combining the above estimates,
\[
\begin{aligned}
\operatorname{Re}
\langle
G_{ss}(d)x_s,x_s
\rangle_X
&\le
-
\left(
d_-\lambda_{N+1}
-
\ell_s(K_1)
\right)
\|x_s\|_X^2
\\
&=
-\mu_s\|x_s\|_X^2,
\end{aligned}
\]
which proves \eqref{eq:uniform-residual-dissipation}.

\end{proof}

The residual estimate controls the diagonal high-frequency dynamics.
Uniform stability of the coupled system also depends on the interaction between \(X_c\) and \(X_s\).

To express the spillover couplings in the same metric as the critical stability estimate used below, let \(\Pi\in\mathcal L(X_c)\) be a self-adjoint positive-definite operator.
Since \(X_c\) is finite-dimensional, \(\Pi\) is boundedly invertible and admits unique self-adjoint positive-definite square roots \(\Pi^{1/2}\) and \(\Pi^{-1/2}\). The associated quadratic form
\[
V_c(x_c)
=
\langle \Pi x_c,x_c\rangle_X
=
\|\Pi^{1/2}x_c\|_X^2
\]
defines the Lyapunov metric used for the critical component. The operators \(\Pi^{1/2}\) and \(\Pi^{-1/2}\) are introduced here only to measure the two spillover directions consistently with this metric; the stability condition determining \(\Pi\) is specified in the next subsection.
Define
\begin{equation}
\gamma_{cs}
:=
\left\|
\Pi^{1/2}G_{cs}
\right\|_{\mathcal L(X_s,X_c)}
=
\left\|
\Pi^{1/2}P_NL_KQ_N
\right\|_{\mathcal L(X_s,X_c)},
\label{eq:weighted-gamma-cs}
\end{equation}
and
\begin{equation}
\gamma_{sc}
:=
\left\|
G_{sc}\Pi^{-1/2}
\right\|_{\mathcal L(X_c,X_s)}
=
\left\|
Q_NL_KP_N\Pi^{-1/2}
\right\|_{\mathcal L(X_c,X_s)}.
\label{eq:weighted-gamma-sc}
\end{equation}

These constants quantify the two spillover directions relative to the Lyapunov metric induced by \(\Pi\). They satisfy
\begin{equation}
\gamma_{cs}
\le
\|\Pi^{1/2}\|_{\mathcal L(X_c)}
\,
\|P_NL_KQ_N\|_{\mathcal L(X_s,X_c)},
\label{eq:gamma-cs-computable-bound}
\end{equation}
and
\begin{equation}
\gamma_{sc}
\le
\|\Pi^{-1/2}\|_{\mathcal L(X_c)}
\,
\|Q_NL_KP_N\|_{\mathcal L(X_c,X_s)}.
\label{eq:gamma-sc-computable-bound}
\end{equation}

Lemma~\ref{lem:uniform-high-frequency-dissipation} isolates the mechanism governing the residual dynamics. The increasing spectral coercivity of \(A_0\) provides a dissipation margin that competes with the bounded contribution of \(L_K\). Hence, once \(d_-\lambda_{N+1}\) dominates \(\ell_s(K_1)\), the entire residual
subspace is uniformly dissipative for every admissible value of the parameter. The remaining issue is therefore not the internal stability of the residual modes, but their interaction with the finite-dimensional critical component.

The following estimates quantify the corresponding spillover terms.

\begin{lemma}[Weighted Spillover Estimates]
\label{lem:weighted-spillover-estimates}

For every
\[
x_c\in X_c,
\qquad
x_s\in X_s,
\]
one has
\begin{equation}
\left|
\left\langle
\Pi G_{cs}x_s,x_c
\right\rangle_X
\right|
\le
\gamma_{cs}
\,
\|\Pi^{1/2}x_c\|_X
\,
\|x_s\|_X,
\label{eq:weighted-critical-to-residual-estimate}
\end{equation}
and
\begin{equation}
\left|
\left\langle
G_{sc}x_c,x_s
\right\rangle_X
\right|
\le
\gamma_{sc}
\,
\|\Pi^{1/2}x_c\|_X
\,
\|x_s\|_X.
\label{eq:weighted-residual-to-critical-estimate}
\end{equation}

\end{lemma}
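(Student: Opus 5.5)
Both estimates reduce to Cauchy--Schwarz combined with the definitions \eqref{eq:weighted-gamma-cs} and \eqref{eq:weighted-gamma-sc}. The only structural ingredients are the self-adjointness of \(\Pi^{1/2}\) on \(X_c\), so that it can be moved across the inner product, and the factorization \(x_c=\Pi^{-1/2}\Pi^{1/2}x_c\), which exposes the weighted norm \(\|\Pi^{1/2}x_c\|_X\).

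For \eqref{eq:weighted-critical-to-residual-estimate}, fix \(x_c\in X_c\) and \(x_s\in X_s\). The vector \(G_{cs}x_s\) lies in \(X_c\), and \(\Pi=\Pi^{1/2}\Pi^{1/2}\) with \(\Pi^{1/2}\) self-adjoint on \(X_c\). Hence
\[
\langle \Pi G_{cs}x_s,x_c\rangle_X=\langle \Pi^{1/2}G_{cs}x_s,\Pi^{1/2}x_c\rangle_X .
\]
Applying Cauchy--Schwarz and then the operator-norm definition of \(\gamma_{cs}\) gives
\[
\|\Pi^{1/2}G_{cs}x_s\|_X\le\gamma_{cs}\|x_s\|_X ,
\]
which yields the claim.

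For \eqref{eq:weighted-residual-to-critical-estimate}, write \(x_c=\Pi^{-1/2}w\) with \(w:=\Pi^{1/2}x_c\in X_c\). This is legitimate because \(\Pi^{1/2}\) is invertible on the finite-dimensional space \(X_c\). Then \(G_{sc}x_c=G_{sc}\Pi^{-1/2}w\). Cauchy--Schwarz together with the definition of \(\gamma_{sc}\) give
\[
|\langle G_{sc}x_c,x_s\rangle_X|\le\|G_{sc}\Pi^{-1/2}w\|_X\|x_s\|_X\le\gamma_{sc}\|\Pi^{1/2}x_c\|_X\|x_s\|_X .
\]

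The only point needing care is bookkeeping: \(\Pi\), \(\Pi^{\pm1/2}\) act only on \(X_c\), so every composition must act on \(X_c\)-valued vectors. This holds because \(G_{cs}\) maps \(X_s\) into \(X_c\) (its leading factor is \(P_N\)), and \(\Pi^{-1/2}\) is applied to \(w\in X_c\) before \(G_{sc}\). If \(X\) is real, no real parts are needed and the identities hold verbatim; in the complexification the same steps apply.
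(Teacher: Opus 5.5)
Your proposal is correct and follows essentially the same route as the paper. You move \(\Pi^{1/2}\) across the inner product and apply Cauchy--Schwarz with the definition of \(\gamma_{cs}\) for the first estimate. For the second, you factor \(x_c=\Pi^{-1/2}\Pi^{1/2}x_c\) and bound using \(\gamma_{sc}=\|G_{sc}\Pi^{-1/2}\|_{\mathcal L(X_c,X_s)}\), and your remarks on which spaces the operators act on are accurate.
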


\begin{proof}

Using
\eqref{eq:weighted-gamma-cs},
\[
\begin{aligned}
\left|
\langle
\Pi G_{cs}x_s,x_c
\rangle_X
\right|
&=
\left|
\langle
\Pi^{1/2}G_{cs}x_s,
\Pi^{1/2}x_c
\rangle_X
\right|
\\
&\le
\left\|
\Pi^{1/2}G_{cs}x_s
\right\|_X
\,
\|\Pi^{1/2}x_c\|_X
\\
&\le
\gamma_{cs}
\,
\|x_s\|_X
\,
\|\Pi^{1/2}x_c\|_X,
\end{aligned}
\]
which proves
\eqref{eq:weighted-critical-to-residual-estimate}.

For the second estimate, write \(x_c=\Pi^{-1/2}\Pi^{1/2}x_c\). Then
\[
\begin{aligned}
\left|
\langle
G_{sc}x_c,x_s
\rangle_X
\right|
&=
\left|
\langle
G_{sc}\Pi^{-1/2}
\Pi^{1/2}x_c,
x_s
\rangle_X
\right|
\\
&\le
\left\|
G_{sc}\Pi^{-1/2}
\right\|_{\mathcal L(X_c,X_s)}
\,
\|\Pi^{1/2}x_c\|_X
\,
\|x_s\|_X
\\
&=
\gamma_{sc}
\,
\|\Pi^{1/2}x_c\|_X
\,
\|x_s\|_X.
\end{aligned}
\]
This proves
\eqref{eq:weighted-residual-to-critical-estimate}.

\end{proof}

The residual dissipation margin and the weighted spillover estimates  provide the bounds required for the infinite-dimensional component of the stability analysis. The finite-dimensional critical dynamics are addressed next.

\subsection{Uniform Stability Certificate}
\label{subsec:uniform-certificate}

The residual dissipation margin and the weighted spillover estimates established in the previous subsection characterize the
infinite-dimensional component of the regular evolution family. The remaining ingredient is a uniform stability certificate for the finite-dimensional critical dynamics.

The following proposition provides a common quadratic certificate for the critical subsystem.

\begin{proposition}[Common Quadratic Certificate]
\label{prop:critical-quadratic-certificate}

Consider the critical subsystem
\begin{equation}
\dot z_c(t)
=
G_{cc}(d(t))z_c(t),
\label{eq:critical-subsystem}
\end{equation}
where
\[
G_{cc}(d)
=
-dA_{0,c}
-
P_NL_KP_N,
\qquad
d\in[d_-,d_+].
\]

Assume that there exist a self-adjoint positive-definite operator
\[
\Pi:X_c\rightarrow X_c
\]
and a constant
\[
\mu_c>0
\]
such that
\begin{equation}
\Pi G_{cc}(d)
+
G_{cc}(d)^*\Pi
\le
-2\mu_c\Pi,
\qquad
\forall d\in[d_-,d_+].
\label{eq:critical-LMI}
\end{equation}

Then the critical evolution family is uniformly exponentially stable.
More precisely, there exists
\[
M_c
=
\sqrt{
\frac{\lambda_{\max}(\Pi)}
{\lambda_{\min}(\Pi)}
}
\]
such that
\begin{equation}
\|z_c(t)\|_X
\le
M_c
e^{-\mu_c(t-s)}
\|z_c(s)\|_X,
\qquad
0\le s\le t.
\label{eq:critical-exp-estimate}
\end{equation}

\end{proposition}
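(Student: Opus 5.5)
The plan is the standard Lyapunov argument with $V_c(x_c)=\langle \Pi x_c,x_c\rangle_X$. The critical subspace $X_c$ is finite-dimensional, and $G_{cc}(d)$ is affine in $d$ with bounded entries on $[d_-,d_+]$. For any measurable trajectory $d(\cdot)$, the critical subsystem \eqref{eq:critical-subsystem} is therefore a linear Carathéodory ODE with an essentially bounded coefficient. Its solutions are absolutely continuous, and the transition operator $\Phi_d(t,s)$ exists uniquely. This disposes of well-posedness, so the only real content is the decay estimate.

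First, I would differentiate $V_c(z_c(t))$ along a solution for almost every $t$. Using self-adjointness of $\Pi$ and the real part of the inner product in the complexified setting,
\[
\frac{d}{dt}V_c(z_c(t))=\big\langle \big(\Pi G_{cc}(d(t))+G_{cc}(d(t))^*\Pi\big)z_c(t),z_c(t)\big\rangle_X .
\]
By \eqref{eq:critical-LMI}, applied pointwise at the value $d(t)\in[d_-,d_+]$, this is bounded above by $-2\mu_cV_c(z_c(t))$. The inequality holds for every $d$ in the interval, so no regularity of $d(\cdot)$ is needed beyond measurability. This pointwise use of a common certificate is the key step: the same $\Pi$ works for all frozen values.

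Next, $t\mapsto V_c(z_c(t))$ is absolutely continuous. Applying Grönwall's inequality to $e^{2\mu_c t}V_c(z_c(t))$, whose derivative is almost everywhere nonpositive, gives
\[
V_c(z_c(t))\le e^{-2\mu_c(t-s)}V_c(z_c(s)),\qquad 0\le s\le t .
\]

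Finally, I would convert back to the $X$-norm using the two-sided bounds $\lambda_{\min}(\Pi)\|x\|_X^2\le V_c(x)\le\lambda_{\max}(\Pi)\|x\|_X^2$. These follow from the spectral theorem for the positive-definite matrix $\Pi$ on $X_c$, with $\lambda_{\min}(\Pi)>0$. Chaining the three inequalities and taking square roots yields \eqref{eq:critical-exp-estimate} with $M_c=\sqrt{\lambda_{\max}(\Pi)/\lambda_{\min}(\Pi)}$.

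There is no serious obstacle. The only point requiring care is justifying the almost-everywhere differentiation and the Grönwall step when $d(\cdot)$ is merely measurable. I would handle it by noting that $z_c\in W^{1,\infty}_{\mathrm{loc}}$, so $V_c\circ z_c$ is locally absolutely continuous and the product rule holds almost everywhere.
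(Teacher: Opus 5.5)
Your proposal is correct and matches the paper's proof: both use the Lyapunov functional $V_c(z_c)=\langle \Pi z_c,z_c\rangle_X$, apply the common LMI pointwise at $d(t)$ to get $\dot V_c\le -2\mu_c V_c$, use Gronwall, and convert back to the $X$-norm through the $\lambda_{\min}(\Pi)$/$\lambda_{\max}(\Pi)$ bounds. Your remarks on Carath\'eodory solutions and absolute continuity of $V_c\circ z_c$ for merely measurable $d(\cdot)$ add a regularity detail that the paper leaves implicit.
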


\begin{proof}

Consider the quadratic Lyapunov functional
\[
V_c(z_c)
=
\langle
\Pi z_c,
z_c
\rangle_X.
\]

Since
\(
\Pi=\Pi^*>0,
\)
there exist positive constants
\[
\lambda_{\min}(\Pi),
\qquad
\lambda_{\max}(\Pi),
\]
such that
\begin{equation}
\lambda_{\min}(\Pi)\|z_c\|_X^2
\le
V_c(z_c)
\le
\lambda_{\max}(\Pi)\|z_c\|_X^2.
\label{eq:critical-equivalence}
\end{equation}

Along every solution of
\eqref{eq:critical-subsystem},
\[
\begin{aligned}
\dot V_c
&=
\left\langle
\left(
\Pi G_{cc}(d(t))
+
G_{cc}(d(t))^*\Pi
\right)
z_c,
z_c
\right\rangle_X
\\
&\le
-2\mu_cV_c,
\end{aligned}
\]
where \eqref{eq:critical-LMI} has been used.

Gronwall's inequality gives
\[
V_c(t)
\le
e^{-2\mu_c(t-s)}
V_c(s),
\qquad
0\le s\le t.
\]

Combining this estimate with
\eqref{eq:critical-equivalence}
yields
\eqref{eq:critical-exp-estimate}.

\end{proof}

The critical estimate can now be combined with the residual dissipation margin and the spillover bounds established previously. The following result provides a uniform exponential stability certificate for the complete critical--residual system.

\begin{theorem}[Uniform Stability Certificate]
\label{thm:critical-residual-certificate}

Assume Assumptions~\ref{ass:time} and~\ref{ass:A0}.

Suppose that

\begin{enumerate}

\item
the residual dissipation margin satisfies
\(
\mu_s>0;
\)

\item
the critical subsystem satisfies the quadratic certificate of
Proposition~\ref{prop:critical-quadratic-certificate};

\item
there exists
\(
\eta>0
\)
such that
\begin{equation}
4\eta\mu_c\mu_s
>
\left(
\gamma_{cs}
+
\eta\gamma_{sc}
\right)^2.
\label{eq:small-gain-condition}
\end{equation}

\end{enumerate}

Then there exist constants
\[
M\ge1,
\qquad
\omega>0,
\]
independent of the admissible trajectory
\[
d(\cdot):[0,\infty)\rightarrow[d_-,d_+],
\]
such that every solution of
\eqref{eq:regular-evolution}
satisfies
\begin{equation}
\|z(t)\|_X
\le
Me^{-\omega(t-s)}
\|z(s)\|_X,
\qquad
0\le s\le t.
\label{eq:uniform-global-decay}
\end{equation}

Hence, the regular evolution family is uniformly exponentially stable.

\end{theorem}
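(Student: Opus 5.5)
The plan is to build a composite Lyapunov functional
\[
V(z)=\langle \Pi z_c,z_c\rangle_X+\eta\|z_s\|_X^2,
\qquad z_c=P_Nz,\ z_s=Q_Nz,
\]
with the weight $\eta>0$ taken from condition \eqref{eq:small-gain-condition}. I will first verify that $V$ is equivalent to $\|z\|_X^2$ uniformly in $d(\cdot)$: since $z_c\perp z_s$,
\[
\min\{\lambda_{\min}(\Pi),\eta\}\|z\|_X^2\le V(z)\le \max\{\lambda_{\max}(\Pi),\eta\}\|z\|_X^2 .
\]
Well-posedness of \eqref{eq:regular-evolution} for a measurable trajectory $d(\cdot)$ will be handled by working first with strong (or Galerkin-approximate) solutions. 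For these, $t\mapsto V(z(t))$ is absolutely continuous and can be differentiated along the block system \eqref{eq:critical-residual-block-system}. The general case then follows by density, since the final estimate is uniform.

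Differentiating along \eqref{eq:critical-residual-block-system} gives
\[
\dot V=\langle(\Pi G_{cc}+G_{cc}^*\Pi)z_c,z_c\rangle+2\operatorname{Re}\langle \Pi G_{cs}z_s,z_c\rangle+2\eta\operatorname{Re}\langle G_{ss}z_s,z_s\rangle+2\eta\operatorname{Re}\langle G_{sc}z_c,z_s\rangle .
\]
Each term will be bounded by an earlier result. The LMI \eqref{eq:critical-LMI} bounds the first term by $-2\mu_c\|\Pi^{1/2}z_c\|^2$. Lemma~\ref{lem:uniform-high-frequency-dissipation} bounds the third term by $-2\eta\mu_s\|z_s\|^2$. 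Lemma~\ref{lem:weighted-spillover-estimates} bounds the two cross terms by $2(\gamma_{cs}+\eta\gamma_{sc})\,a\,b$, where $a=\|\Pi^{1/2}z_c\|$ and $b=\|z_s\|$. Putting these together,
\[
\dot V\le -2\bigl(\mu_c a^2-(\gamma_{cs}+\eta\gamma_{sc})ab+\eta\mu_s b^2\bigr).
\]
The quadratic form on the right is positive definite in $(a,b)$ exactly when $4\mu_c\eta\mu_s>(\gamma_{cs}+\eta\gamma_{sc})^2$, which is assumption \eqref{eq:small-gain-condition}.

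The main quantitative step is extracting an explicit rate. I will set
\[
\omega=\min\{\mu_c,\mu_s\}-\tfrac12\bigl(\gamma_{cs}+\eta\gamma_{sc}\bigr)\max\{1,\eta^{-1}\}\ \text{(or similar)}.
\]
More cleanly, I will take the smallest eigenvalue $\kappa>0$ of the symmetric matrix $\begin{pmatrix}\mu_c & -g/2\\ -g/2 & \eta\mu_s\end{pmatrix}$ with $g=\gamma_{cs}+\eta\gamma_{sc}$. This gives $\dot V\le -2\kappa(a^2+b^2)$. Since $a^2+b^2\ge \min\{1,\eta^{-1}\}V$, it follows that $\dot V\le-2\omega V$ with $\omega=\kappa\min\{1,\eta^{-1}\}$.

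Gronwall then gives $V(t)\le e^{-2\omega(t-s)}V(s)$. The equivalence bounds convert this into \eqref{eq:uniform-global-decay} with $M=\sqrt{\max\{\lambda_{\max}(\Pi),\eta\}/\min\{\lambda_{\min}(\Pi),\eta\}}$. All constants depend only on $\Pi,\eta,\mu_c,\mu_s,\gamma_{cs},\gamma_{sc}$, so they are independent of $d(\cdot)$.

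I expect the delicate point to be the justification of the differentiation of $V$ for merely measurable $d(\cdot)$ with unbounded $A_0$. I would handle it by Galerkin truncation onto $\operatorname{span}\{\phi_1,\dots,\phi_M\}$, $M>N$. Each truncation is a Carathéodory ODE to which the computation applies verbatim: all bounds hold on the truncated residual space, because $\lambda_{N+1}$ is still its smallest eigenvalue and the spillover norms only decrease. I would then pass to the limit $M\to\infty$ using the uniform estimate.
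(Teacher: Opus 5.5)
Your proposal is correct and follows the paper's proof essentially step for step. It uses the same block-diagonal functional $V=\langle \Pi z_c,z_c\rangle_X+\eta\|z_s\|_X^2$ with the same equivalence constants, the same term-by-term bounds from \eqref{eq:critical-LMI}, Lemma~\ref{lem:uniform-high-frequency-dissipation} and Lemma~\ref{lem:weighted-spillover-estimates}, positivity of the resulting $2\times 2$ form via \eqref{eq:small-gain-condition}, and Gronwall. The differences are minor: the paper takes the rate $\nu=\lambda_{\min}(D_\eta^{-1/2}H_\eta D_\eta^{-1/2})$, where $H_\eta$ is twice your matrix and $D_\eta=\operatorname{diag}(1,\eta)$, and this $\nu$ dominates your $2\kappa\min\{1,\eta^{-1}\}$; the paper also only invokes density of strong solutions, whereas you give a more explicit Galerkin justification; and your tentative first formula for $\omega$ is unjustified and should simply be dropped.
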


\begin{proof}

Consider the block-diagonal Lyapunov functional
\begin{equation}
V(z)
=
\left\langle
\Pi z_c,
z_c
\right\rangle_X
+
\eta\|z_s\|_X^2,
\label{eq:coupled-Lyapunov}
\end{equation}
where \(\eta>0\) satisfies
\eqref{eq:small-gain-condition}.

Since
\[
\Pi=\Pi^*>0
\]
on the finite-dimensional space \(X_c\), there exist constants
\[
m_\Pi
=
\lambda_{\min}(\Pi)>0,
\qquad
M_\Pi
=
\lambda_{\max}(\Pi)>0,
\]
such that
\[
m_\Pi\|z_c\|_X^2
\le
\left\langle
\Pi z_c,
z_c
\right\rangle_X
\le
M_\Pi\|z_c\|_X^2.
\]
Consequently,
\begin{equation}
m_V\|z\|_X^2
\le
V(z)
\le
M_V\|z\|_X^2,
\label{eq:Lyapunov-equivalence}
\end{equation}
where
\[
m_V:=\min\{m_\Pi,\eta\},
\qquad
M_V:=\max\{M_\Pi,\eta\}.
\]

Along a strong solution of
\eqref{eq:critical-residual-block-system}, one has
\[
\begin{aligned}
\dot V
={}&
\left\langle
\left(
\Pi G_{cc}(d(t))
+
G_{cc}(d(t))^*\Pi
\right)z_c,
z_c
\right\rangle_X
+
2\operatorname{Re}
\left\langle
\Pi G_{cs}z_s,
z_c
\right\rangle_X
+
2\eta\operatorname{Re}
\left\langle
G_{sc}z_c,
z_s
\right\rangle_X
\\
&+
2\eta\operatorname{Re}
\left\langle
G_{ss}(d(t))z_s,
z_s
\right\rangle_X .
\end{aligned}
\]

By the common quadratic certificate of Proposition~\ref{prop:critical-quadratic-certificate},
\[
\left\langle
\left(
\Pi G_{cc}(d(t))
+
G_{cc}(d(t))^*\Pi
\right)z_c,
z_c
\right\rangle_X
\le
-2\mu_c
\|\Pi^{1/2}z_c\|_X^2.
\]

By Lemma~\ref{lem:uniform-high-frequency-dissipation},
\[
\operatorname{Re}
\left\langle
G_{ss}(d(t))z_s,
z_s
\right\rangle_X
\le
-\mu_s\|z_s\|_X^2.
\]

Finally, the weighted spillover estimates of Lemma~\ref{lem:weighted-spillover-estimates}
give
\[
\left|
\left\langle
\Pi G_{cs}z_s,
z_c
\right\rangle_X
\right|
\le
\gamma_{cs}
\|\Pi^{1/2}z_c\|_X
\|z_s\|_X
\]
and
\[
\left|
\left\langle
G_{sc}z_c,
z_s
\right\rangle_X
\right|
\le
\gamma_{sc}
\|\Pi^{1/2}z_c\|_X
\|z_s\|_X.
\]

Combining these estimates yields
\begin{equation}
\begin{aligned}
\dot V
\le{}&
-2\mu_c
\|\Pi^{1/2}z_c\|_X^2
-
2\eta\mu_s
\|z_s\|_X^2
+
2\left(
\gamma_{cs}
+
\eta\gamma_{sc}
\right)
\|\Pi^{1/2}z_c\|_X
\|z_s\|_X.
\end{aligned}
\label{eq:coupled-Lyapunov-derivative}
\end{equation}

Introduce
\[
u:=\|\Pi^{1/2}z_c\|_X,
\qquad
v:=\|z_s\|_X,
\]
and define
\[
H_\eta
:=
\begin{pmatrix}
2\mu_c
&
-\left(
\gamma_{cs}+\eta\gamma_{sc}
\right)
\\[1mm]
-\left(
\gamma_{cs}+\eta\gamma_{sc}
\right)
&
2\eta\mu_s
\end{pmatrix}.
\]
Then
\[
\dot V
\le
-
\begin{bmatrix}
u\\
v
\end{bmatrix}^{\!T}
H_\eta
\begin{bmatrix}
u\\
v
\end{bmatrix}.
\]

The first leading principal minor of \(H_\eta\) is positive because
\[
2\mu_c>0,
\]
and its determinant satisfies
\[
\det H_\eta
=
4\eta\mu_c\mu_s
-
\left(
\gamma_{cs}
+
\eta\gamma_{sc}
\right)^2
>
0
\]
by \eqref{eq:small-gain-condition}.
Hence,
\[
H_\eta>0.
\]

Let
\[
D_\eta
:=
\begin{pmatrix}
1&0\\
0&\eta
\end{pmatrix}.
\]
Since both \(H_\eta\) and \(D_\eta\) are positive definite, the constant
\[
\nu
:=
\lambda_{\min}
\left(
D_\eta^{-1/2}
H_\eta
D_\eta^{-1/2}
\right)
\]
is strictly positive. Therefore,
\[
\begin{bmatrix}
u\\
v
\end{bmatrix}^{\!T}
H_\eta
\begin{bmatrix}
u\\
v
\end{bmatrix}
\ge
\nu
\left(
u^2+\eta v^2
\right).
\]
Since
\[
u^2+\eta v^2
=
\left\langle
\Pi z_c,
z_c
\right\rangle_X
+
\eta\|z_s\|_X^2
=
V(z),
\]
it follows that
\begin{equation}
\dot V
\le
-\nu V.
\label{eq:global-Lyapunov-decay}
\end{equation}

Gronwall's inequality gives
\[
V(z(t))
\le
e^{-\nu(t-s)}
V(z(s)),
\qquad
0\le s\le t.
\]
Using
\eqref{eq:Lyapunov-equivalence}, we obtain
\[
\|z(t)\|_X^2
\le
\frac{M_V}{m_V}
e^{-\nu(t-s)}
\|z(s)\|_X^2.
\]
Consequently,
\[
\|z(t)\|_X
\le
\sqrt{\frac{M_V}{m_V}}
e^{-\frac{\nu}{2}(t-s)}
\|z(s)\|_X.
\]

Thus, \eqref{eq:uniform-global-decay}
holds with
\[
M
=
\sqrt{\frac{M_V}{m_V}},
\qquad
\omega
=
\frac{\nu}{2},
\]
and these constants are independent of the admissible trajectory \(d(\cdot)\).

The estimate is first obtained for strong solutions and extends to mild solutions by the standard density argument associated with the well-posed evolution family.

\end{proof}

Theorem~\ref{thm:critical-residual-certificate} reduces the uniform stability of the full evolution family to three quantitative requirements with distinct roles. The critical modes must admit a
common quadratic certificate, the residual modes must retain a positive dissipation margin, and the interaction between the two components must remain sufficiently weak relative to these margins. The small-gain condition expresses precisely this balance. In particular, stability of the critical and residual subsystems separately is not sufficient; the spillover terms must also be controlled. Once these requirements are satisfied, the resulting exponential estimate is uniform with respect to the entire admissible parameter trajectory \(d(\cdot)\), rather than only to individual frozen systems.

\subsection{Explicit Stability Criterion}
\label{subsec:explicit-criterion}

The uniform stability certificate established above admits the following explicit small-gain formulation.

\begin{corollary}[Explicit Small-Gain Criterion]
\label{cor:explicit-small-gain}

Assume that the hypotheses of Theorem~\ref{thm:critical-residual-certificate} hold.

If there exists
\[
\eta>0
\]
such that
\[
4\eta\mu_c\mu_s
>
\left(
\gamma_{cs}
+
\eta\gamma_{sc}
\right)^2,
\]
then the regular evolution family generated by \eqref{eq:regular-evolution} is uniformly exponentially stable.

\end{corollary}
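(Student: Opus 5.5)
This corollary is essentially a restatement of Theorem~\ref{thm:critical-residual-certificate}, so the plan is to reduce it to that theorem rather than rebuild the Lyapunov argument. First I would check that the hypotheses of Theorem~\ref{thm:critical-residual-certificate} are in force. Assumptions~\ref{ass:time} and~\ref{ass:A0} hold by assumption. The residual margin satisfies \(\mu_s>0\). The critical subsystem admits a self-adjoint positive-definite \(\Pi\) and a rate \(\mu_c>0\) satisfying \eqref{eq:critical-LMI}. With these data fixed, the weighted constants \(\gamma_{cs}\) and \(\gamma_{sc}\) in \eqref{eq:weighted-gamma-cs}--\eqref{eq:weighted-gamma-sc} are well defined. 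The remaining hypothesis, the existence of \(\eta>0\) satisfying \eqref{eq:small-gain-condition}, is exactly what the corollary supplies.

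Next I would invoke Theorem~\ref{thm:critical-residual-certificate} directly. It yields constants \(M\ge1\) and \(\omega>0\), independent of the measurable trajectory \(d(\cdot)\), for which \eqref{eq:uniform-global-decay} holds. This is by definition the uniform exponential stability of the evolution family generated by \eqref{eq:regular-evolution}.

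For a self-contained proof, I would instead sketch the mechanism. Use the Lyapunov functional \eqref{eq:coupled-Lyapunov} with the given \(\eta\). Bound the four terms of \(\dot V\) using Proposition~\ref{prop:critical-quadratic-certificate}, Lemma~\ref{lem:uniform-high-frequency-dissipation} and Lemma~\ref{lem:weighted-spillover-estimates}. This gives \eqref{eq:coupled-Lyapunov-derivative}, and the small-gain inequality is precisely \(\det H_\eta>0\). The resulting bound \(\dot V\le-\nu V\) and the norm equivalence \eqref{eq:Lyapunov-equivalence} then give \(M=\sqrt{M_V/m_V}\) and \(\omega=\nu/2\).

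The only step requiring care, which I expect to be the main obstacle, is justifying the differentiation of \(V\) along solutions for merely measurable \(d(\cdot)\). I would handle it as in the theorem's proof. First obtain the estimate for strong solutions, for which \(V(z(t))\) is absolutely continuous and the inequality holds almost everywhere. Then pass to mild solutions by density, using the well-posed evolution family.

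Finally I would remark on the role of \(\eta\). If \(\gamma_{sc}>0\), minimizing \((\gamma_{cs}+\eta\gamma_{sc})^2/\eta\) over \(\eta>0\) gives \(\eta=\gamma_{cs}/\gamma_{sc}\). Admissible \(\eta\) therefore exist exactly when \(\mu_c\mu_s>\gamma_{cs}\gamma_{sc}\), with the obvious modification when \(\gamma_{sc}=0\). This makes the criterion explicit, although it is not needed for the implication itself.
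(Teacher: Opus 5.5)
Your proposal is correct and matches the paper's proof, which simply notes that the displayed inequality is exactly hypothesis (3) of Theorem~\ref{thm:critical-residual-certificate} and invokes that theorem. Your self-contained Lyapunov sketch and your remark that an admissible $\eta>0$ exists if and only if $\mu_c\mu_s>\gamma_{cs}\gamma_{sc}$ are both correct, but they are extras not needed for the implication.
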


\begin{proof}

The inequality above is precisely the small-gain condition required in Theorem~\ref{thm:critical-residual-certificate}. The conclusion follows immediately.

\end{proof}

This section establishes a constructive stability certificate for the regular evolution family. The certificate combines a common quadratic estimate for the critical dynamics with the residual dissipation margin and the spillover bounds to establish uniform exponential stability.

The remaining question is how this abstract certificate can be related to the original infinite-dimensional feedback problem. The next section answers this question through a structural finite-to-infinite lifting result.

\section{Structural Finite-to-Infinite Lifting}
\label{sec:structural-lifting}

This section establishes the structural finite-to-infinite lifting principle underlying the proposed stability theory. It shows that the abstract stability certificate developed in Section~\ref{sec:critical-residual} follows directly from intrinsic operator-theoretic properties of the regularized evolution. As a consequence, the verification of the infinite-dimensional stability conditions reduces to a finite-dimensional analysis.

\subsection{Structural Decay of the Spillover Operators}
\label{subsec:structural-spillover}

The finite-to-infinite lifting relies on a structural property of the regularized dynamics. More precisely, the coupling between the critical and residual subspaces is induced by the finite-rank perturbation \(L_K\). The following result shows that the associated spillover operators vanish asymptotically as the spectral truncation order increases.

\begin{proposition}[Structural Decay of the Spillover Operators]
\label{prop:structural-spillover}

Let
\[
L_K=-BK_1C.
\]

Then
\[
L_K\in\mathcal L(X)
\]
has finite rank. Moreover,
\begin{equation}
\|Q_NL_KP_N\|
\longrightarrow0,
\label{eq:QNLPN-decay}
\end{equation}
\begin{equation}
\|P_NL_KQ_N\|
\longrightarrow0,
\label{eq:PNLQN-decay}
\end{equation}
and
\begin{equation}
\|Q_NL_KQ_N\|
\longrightarrow0,
\label{eq:QNLQN-decay}
\end{equation}
as
\[
N\rightarrow\infty.
\]

\end{proposition}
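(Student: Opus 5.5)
Since $L_K=-BK_1C$ factors through $\mathbb R^m$ (equivalently $\mathbb R^p$), finite rank is immediate: $\operatorname{Ran}(L_K)\subseteq\operatorname{Ran}(B)$, so $\operatorname{rank}(L_K)\le\min\{m,p\}$, and boundedness follows from $B\in\mathcal L(\mathbb R^m,X)$, $C\in\mathcal L(X,\mathbb R^p)$. For the decay statements, I would use a single mechanism: $Q_N$ converges strongly to $0$ by completeness of the eigenbasis $\{\phi_j\}$ from Assumption~\ref{ass:A0}, and strong convergence becomes norm convergence once it is composed with a compact (here finite-rank) operator.

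Concretely, write $B=\sum_{i=1}^m b_i e_i^{T}$ with $b_i:=Be_i\in X$, and $C x=(\langle c_k,x\rangle_X)_{k=1}^p$ with $c_k\in X$ (Riesz representation). Then $L_K x=-\sum_{i,k}(K_1)_{ik}\langle c_k,x\rangle_X\, b_i$. Consequently
\[
\|Q_NL_K\|\le \|K_1\|\,\|C\|\,\Bigl(\sum_{i=1}^m\|Q_Nb_i\|_X^2\Bigr)^{1/2},
\qquad
\|L_KQ_N\|=\|Q_NL_K^*\|\le \|K_1\|\,\|B\|\,\Bigl(\sum_{k=1}^p\|Q_Nc_k\|_X^2\Bigr)^{1/2},
\]
using self-adjointness of $Q_N$ and $L_K^*=-C^*K_1^*B^*$, whose range is spanned by the $c_k$. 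Since $\|Q_Nx\|_X^2=\sum_{j>N}|\langle x,\phi_j\rangle_X|^2\to0$ for each fixed $x$ (Parseval tail), each of the finitely many terms $\|Q_Nb_i\|$ and $\|Q_Nc_k\|$ tends to zero. This yields $\|Q_NL_K\|\to0$ and $\|L_KQ_N\|\to0$.

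The three claims then follow from $\|P_N\|,\|Q_N\|\le1$: $\|Q_NL_KP_N\|\le\|Q_NL_K\|$ gives \eqref{eq:QNLPN-decay}, $\|P_NL_KQ_N\|\le\|L_KQ_N\|$ gives \eqref{eq:PNLQN-decay}, and $\|Q_NL_KQ_N\|\le\|Q_NL_K\|$ gives \eqref{eq:QNLQN-decay}. (In fact the last decays at least like the product of the two rates.) Alternatively, one can invoke the general lemma that if $T_N\to0$ strongly with $\sup_N\|T_N\|\le1$ and $K$ is compact, then $\|T_NK\|\to0$; this is proved by covering the relatively compact set $K(\text{unit ball})$ with finitely many balls of radius $\varepsilon$.

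The only genuine subtlety, and the step I would be careful with, is that strong convergence $Q_N\to0$ alone does not give norm convergence: indeed $\|Q_N\|=1$ for all $N$. The argument must therefore explicitly use the finite rank of $L_K$, placing $Q_N$ on the side where it meets the finitely many vectors $b_i$ or $c_k$, rather than bounding $\|Q_N\|$ directly. I would also note that the hypothesis $L_K=-BK_1C$ (that is, $A_r=0$) is essential here. For a general bounded $A_r$, the term $Q_NA_rQ_N$ need not vanish; for instance, $A_r=I$ gives $\|Q_NA_rQ_N\|=1$ for every $N$.
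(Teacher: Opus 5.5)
Your proof is correct and follows the same route as the paper. Both arguments reduce everything to $\|Q_NL_K\|\to0$ and $\|L_KQ_N\|=\|Q_NL_K^*\|\to0$, using the adjoint and the self-adjointness of $Q_N$, and then bound the three blocks with $\|P_N\|,\|Q_N\|\le1$. The paper obtains $\|Q_NL_K\|\to0$ from the abstract compactness argument you list as your alternative: strong convergence is uniform on the relatively compact set $L_K(B_X)$. Your explicit expansion in the vectors $b_i,c_k$ instead gives a quantitative rate through the Parseval tails of the actuator and sensor profiles, including the product bound for $Q_NL_KQ_N$. The price is that it relies on finite rank, whereas the paper's argument applies unchanged to a merely compact $L_K$.
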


\begin{proof}

Since
\[
L_K=-BK_1C,
\]
its range is contained in
\[
\operatorname{Ran}(B),
\]
which is finite dimensional. Hence
\[
L_K
\]
has finite rank.

Every finite-rank operator is compact. Moreover,
\[
Q_N\rightarrow0
\]
strongly, whereas
\[
P_N\rightarrow I
\]
strongly. Since the image of the unit ball under a compact operator is relatively compact, the convergence is uniform on \(L_K(B_X)\), yielding
\[
\|Q_NL_K\|
\longrightarrow0.
\]

Because
\[
\|P_N\|=1,
\]
it follows that
\[
\|Q_NL_KP_N\|
\le
\|Q_NL_K\|
\longrightarrow0.
\]

Applying the same argument to the finite-rank adjoint operator
\[
L_K^*
=
-C^*K_1^*B^*
\]
gives
\[
\|Q_NL_K^*\|
\longrightarrow0.
\]

Taking adjoints,
\[
\|L_KQ_N\|
=
\|Q_NL_K^*\|
\longrightarrow0,
\]
and therefore
\[
\|P_NL_KQ_N\|
\le
\|L_KQ_N\|
\longrightarrow0.
\]

Finally,
\[
\|Q_NL_KQ_N\|
\le
\|Q_NL_K\|
\longrightarrow0,
\]
which proves the result.

\end{proof}

Proposition~\ref{prop:structural-spillover} provides the structural
ingredient underlying the lifting procedure. The next subsection shows that this asymptotic decay automatically enforces the hypotheses of the uniform stability certificate established in
Section~\ref{sec:critical-residual}.

\subsection{Asymptotic Verification of the Stability Certificate}
\label{subsec:automatic-certification}

Proposition~\ref{prop:structural-spillover} shows that the spillover
operators vanish asymptotically as the spectral truncation order increases. The following result establishes that this structural decay is sufficient to verify the hypotheses of the uniform stability certificate for sufficiently large truncation orders.

For each \(N\ge1\), let
\[
\mu_{c,N}>0,
\qquad
\mu_{s,N}>0,
\qquad
\Pi_N>0
\]
denote the quantities introduced in
Section~\ref{sec:critical-residual}.

\begin{theorem}[Asymptotic Verification of the Stability Certificate]
\label{thm:automatic-certification}

Assume that
\[
\inf_{N\ge N_0}\mu_{c,N}
=
\underline{\mu}_c
>
0,
\]
and
\[
\sup_{N\ge N_0}\|\Pi_N\|
<
\infty,
\qquad
\sup_{N\ge N_0}\|\Pi_N^{-1}\|
<
\infty.
\]

Then there exists an integer \(N^\star\) such that, for every
\(N\ge N^\star\),
\[
\gamma_{cs,N}\gamma_{sc,N}
<
\mu_{c,N}\mu_{s,N}.
\]

Consequently, the hypotheses of Theorem~\ref{thm:critical-residual-certificate} are satisfied for every \(N\ge N^\star\).

\end{theorem}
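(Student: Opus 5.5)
The plan is to show that, as $N\to\infty$, the product $\gamma_{cs,N}\gamma_{sc,N}$ tends to zero while $\mu_{c,N}\mu_{s,N}$ grows without bound. The inequality $\gamma_{cs,N}\gamma_{sc,N}<\mu_{c,N}\mu_{s,N}$ then holds for all large $N$. Throughout, I work in the setting of Proposition~\ref{prop:structural-spillover}, namely $L_K=-BK_1C$, which has finite rank.

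\textbf{Spillover gains.} By \eqref{eq:gamma-cs-computable-bound} and \eqref{eq:gamma-sc-computable-bound},
\[
\gamma_{cs,N}\gamma_{sc,N}\le \|\Pi_N^{1/2}\|\,\|\Pi_N^{-1/2}\|\,\|P_NL_KQ_N\|\,\|Q_NL_KP_N\|.
\]
The identities $\|\Pi_N^{1/2}\|=\|\Pi_N\|^{1/2}$ and $\|\Pi_N^{-1/2}\|=\|\Pi_N^{-1}\|^{1/2}$ hold, so the uniform bounds in the hypothesis make the prefactor bounded by a constant $\kappa$ independent of $N\ge N_0$. Proposition~\ref{prop:structural-spillover} then gives $\gamma_{cs,N}\gamma_{sc,N}\to0$.

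\textbf{Residual margin.} By \eqref{eq:ell-s-upper-bound}, $\ell_s(K_1)\le\|Q_NL_KQ_N\|$, and by \eqref{eq:QNLQN-decay} this tends to zero; in any case it is bounded by $\|L_K\|$. Since $\lambda_{N+1}\to\infty$ by Assumption~\ref{ass:A0}, it follows that
\[
\mu_{s,N}=d_-\lambda_{N+1}-\ell_s(K_1)\ge d_-\lambda_{N+1}-\|L_K\|\to+\infty .
\]
In particular $\mu_{s,N}>0$ for $N$ large, so the first hypothesis of Theorem~\ref{thm:critical-residual-certificate} holds. Combining this with $\mu_{c,N}\ge\underline\mu_c>0$ yields $\mu_{c,N}\mu_{s,N}\ge\underline\mu_c\,\mu_{s,N}\to\infty$.

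\textbf{Choice of $N^\star$.} I pick $N^\star\ge N_0$ such that, for $N\ge N^\star$, we have $\mu_{s,N}>0$ and $\kappa\|P_NL_KQ_N\|\|Q_NL_KP_N\|<\underline\mu_c\,\mu_{s,N}$. This gives the stated inequality.

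\textbf{Passing to the small-gain condition.} The step I expect to need care is the ``consequently'' clause, because Theorem~\ref{thm:critical-residual-certificate} is phrased with the condition $4\eta\mu_c\mu_s>(\gamma_{cs}+\eta\gamma_{sc})^2$ rather than the product inequality. I would minimize the right-hand side over $\eta$. If $\gamma_{sc}=0$, any sufficiently large $\eta$ works because $\mu_c\mu_s>0$. Otherwise take $\eta=\gamma_{cs}/\gamma_{sc}$ when $\gamma_{cs}>0$; the condition becomes $4\gamma_{cs}\mu_c\mu_s/\gamma_{sc}>4\gamma_{cs}^2$, which is exactly $\gamma_{cs}\gamma_{sc}<\mu_c\mu_s$. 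In the degenerate case $\gamma_{cs}=0$, a small $\eta>0$ works: the condition reads $4\mu_c\mu_s>\eta\gamma_{sc}^2$.

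Finally, the critical certificate hypothesis of Theorem~\ref{thm:critical-residual-certificate} is part of the data, since $\Pi_N$ and $\mu_{c,N}$ are assumed to satisfy \eqref{eq:critical-LMI}. Hence all three hypotheses of Theorem~\ref{thm:critical-residual-certificate} hold for every $N\ge N^\star$.
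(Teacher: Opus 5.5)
Your proposal is correct and follows essentially the same route as the paper: the uniform bounds on $\Pi_N^{\pm1/2}$ together with Proposition~\ref{prop:structural-spillover} give $\gamma_{cs,N}\gamma_{sc,N}\to0$, while $\lambda_{N+1}\to\infty$ gives $\mu_{s,N}\to\infty$ and hence $\mu_{c,N}\mu_{s,N}\ge\underline{\mu}_c\,\mu_{s,N}\to\infty$. Your choice $\eta=\gamma_{cs}/\gamma_{sc}$, with the degenerate cases handled separately, converts the product inequality into condition \eqref{eq:small-gain-condition}; this is a welcome addition, since the paper only asserts that the conclusion follows from Theorem~\ref{thm:critical-residual-certificate} and never carries out that step.
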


\begin{proof}

By Proposition~\ref{prop:structural-spillover},
\[
\|P_NL_KQ_N\|
\longrightarrow0,
\qquad
\|Q_NL_KP_N\|
\longrightarrow0.
\]

Since
\[
\sup_N\|\Pi_N^{1/2}\|<\infty,
\qquad
\sup_N\|\Pi_N^{-1/2}\|<\infty,
\]
it follows that
\[
\gamma_{cs,N}
=
\|\Pi_N^{1/2}P_NL_KQ_N\|
\longrightarrow0,
\]
and
\[
\gamma_{sc,N}
=
\|Q_NL_KP_N\Pi_N^{-1/2}\|
\longrightarrow0.
\]

Moreover, Proposition~\ref{prop:structural-spillover} implies
\[
\|Q_NL_KQ_N\|
\longrightarrow0.
\]

Since
\[
\mu_{s,N}
=
d_-\lambda_{N+1}
-
\ell_{s,N},
\]
where
\(
\ell_{s,N}\le\|Q_NL_KQ_N\|
\)
and
\(
\lambda_{N+1}\to+\infty,
\)
one obtains
\[
\mu_{s,N}
\longrightarrow+\infty.
\]

Consequently,
\[
\gamma_{cs,N}\gamma_{sc,N}
\longrightarrow0,
\]
whereas
\[
\mu_{c,N}\mu_{s,N}
\ge
\underline{\mu}_c\,\mu_{s,N}
\longrightarrow+\infty.
\]

Hence, there exists \(N^\star\in\mathbb N\) such that
\[
\gamma_{cs,N}\gamma_{sc,N}
<
\mu_{c,N}\mu_{s,N},
\qquad
N\ge N^\star.
\]

The conclusion follows from Theorem~\ref{thm:critical-residual-certificate}.

\end{proof}

Theorem~\ref{thm:automatic-certification} shows that the abstract stability conditions become automatically satisfied for sufficiently large truncation orders. This asymptotic verification provides the final ingredient required for the structural finite-to-infinite lifting theorem established next.

\subsection{Structural Finite-to-Infinite Lifting}
\label{subsec:finite-to-infinite-lifting}

The asymptotic verification established above provides the final ingredient required to transfer the abstract stability certificate to the original infinite-dimensional evolution problem. The following result constitutes the main contribution of this section.

\begin{theorem}[Structural Finite-to-Infinite Lifting]
\label{thm:finite-to-infinite-lifting}

Assume that

\begin{enumerate}

\item
the compatibility condition
\[
A_1=BK_0C
\]
holds;

\item
the hypotheses of Theorem~\ref{thm:automatic-certification}
are satisfied.

\end{enumerate}

Then there exists a truncation order
\[
N^\star\ge1
\]
such that, for every
\[
N\ge N^\star,
\]
the hypotheses of the Uniform Stability Certificate (Theorem~\ref{thm:critical-residual-certificate})
are satisfied.

Consequently, the regular evolution equation
\begin{equation}
\dot z
+
d(t)A_0z
+
L_Kz
=
0
\label{eq:regular-evolution-lifting}
\end{equation}
is uniformly exponentially stable. More precisely, there exist constants
\[
M\ge1,
\qquad
\omega>0,
\]
independent of the admissible parameter trajectory
\[
d(\cdot):[0,\infty)\rightarrow[d_-,d_+],
\]
such that every solution satisfies
\[
\|z(t)\|_X
\le
Me^{-\omega(t-s)}
\|z(s)\|_X,
\qquad
0\le s\le t.
\]

\end{theorem}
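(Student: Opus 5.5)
The plan is to reduce the statement to Theorem~\ref{thm:critical-residual-certificate} in three steps: remove the singular term, use Theorem~\ref{thm:automatic-certification} to make the spectral quantities favourable for large $N$, and then convert the product inequality it delivers into the small-gain condition \eqref{eq:small-gain-condition}.

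\emph{Step 1 (removing the singular term).} By hypothesis~1, $\mathrm D=A_1-BK_0C=0$. Hence the singular term $\frac{1}{\alpha(t)}\mathrm D z$ in \eqref{eq:closed-loop-singular} vanishes identically, and the closed loop coincides with \eqref{eq:regular-evolution-lifting}, with $L_K=A_r-BK_1C$.

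I work in the setting of Proposition~\ref{prop:structural-spillover}, namely $L_K=-BK_1C$ (finite rank), which is the setting in which Theorem~\ref{thm:automatic-certification} was derived. If $A_r\neq 0$, the same compactness argument still applies provided $A_r$ is compact: $\|Q_N L_K\|\to0$ and $\|L_K Q_N\|\to 0$ hold for any compact $L_K$. I would point this out explicitly.

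Well-posedness of \eqref{eq:regular-evolution-lifting} for a merely measurable $d(\cdot)$ I take from the standard theory. The form $d(t)\langle A_0^{1/2}\cdot,A_0^{1/2}\cdot\rangle$ is uniformly coercive and bounded on $D(A_0^{1/2})$, because $d_-\le d\le d_+$ and $A_0$ is self-adjoint and strictly positive (Assumption~\ref{ass:A0}). A Lions-type result then gives an evolution family, and the bounded perturbation $L_K$ preserves this. This supplies the strong solutions on which the Lyapunov computation in Theorem~\ref{thm:critical-residual-certificate} is carried out.

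\emph{Step 2 (choosing $N$).} By hypothesis~2 and Theorem~\ref{thm:automatic-certification}, there is $N^\star$ such that for all $N\ge N^\star$:
\begin{itemize}
\item $\gamma_{cs,N}\gamma_{sc,N}<\mu_{c,N}\mu_{s,N}$;
\item $\mu_{c,N}\ge\underline\mu_c>0$, so the quadratic certificate of Proposition~\ref{prop:critical-quadratic-certificate} holds with $\Pi_N$.
\end{itemize}
Since $\mu_{s,N}\to+\infty$, I enlarge $N^\star$ if necessary so that also $\mu_{s,N}>0$ for $N\ge N^\star$. This verifies hypotheses~1 and~2 of Theorem~\ref{thm:critical-residual-certificate}.

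\emph{Step 3 (from the product inequality to the $\eta$-condition).} This is the only step that is not a direct citation, and I expect it to be the main obstacle. The task is to produce $\eta>0$ with $4\eta\mu_c\mu_s>(\gamma_{cs}+\eta\gamma_{sc})^2$ from the product inequality alone. I would split into cases.

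\begin{itemize}
\item If $\gamma_{cs},\gamma_{sc}>0$, take $\eta=\gamma_{cs}/\gamma_{sc}$. The right-hand side is then $4\gamma_{cs}^2$ and the left-hand side is $4\gamma_{cs}\mu_c\mu_s/\gamma_{sc}$. The required inequality is therefore exactly $\gamma_{cs}\gamma_{sc}<\mu_c\mu_s$.
\item If $\gamma_{sc}=0$, the condition reads $4\eta\mu_c\mu_s>\gamma_{cs}^2$, which holds for $\eta$ large.
\item If $\gamma_{cs}=0$, the condition reads $4\mu_c\mu_s>\eta\gamma_{sc}^2$, which holds for $\eta$ small. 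This also covers the case where both $\gamma$'s vanish.
\end{itemize}

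With this $\eta$, Theorem~\ref{thm:critical-residual-certificate} applies for every $N\ge N^\star$. It yields $M=\sqrt{M_V/m_V}$ and $\omega=\nu/2$, which depend only on $\Pi_N$, $\eta$, $\mu_{c,N}$, $\mu_{s,N}$ and the $\gamma$'s, and not on $d(\cdot)$. This gives the uniform exponential estimate claimed in Theorem~\ref{thm:finite-to-infinite-lifting}.
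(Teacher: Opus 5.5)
Your proposal is correct and follows the same route as the paper, whose proof simply chains Theorem~\ref{thm:automatic-certification} into Theorem~\ref{thm:critical-residual-certificate}. Your Step~3 makes explicit a conversion the paper leaves implicit: the choice $\eta=\gamma_{cs}/\gamma_{sc}$ turns \eqref{eq:small-gain-condition} into exactly $\gamma_{cs}\gamma_{sc}<\mu_c\mu_s$, and you handle the degenerate cases separately. Your compactness caveat for $A_r\neq0$ is harmless but unnecessary: for any bounded $L_K$, the uniform bounds on $\Pi_N$ keep $\gamma_{cs,N}\gamma_{sc,N}$ bounded, while $\mu_{c,N}\mu_{s,N}\ge\underline{\mu}_c\,\mu_{s,N}\to\infty$.
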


\begin{proof}

By Theorem~\ref{thm:automatic-certification}, there exists \(N^\star\) such that, for every \(N\ge N^\star\), the hypotheses of Theorem~\ref{thm:critical-residual-certificate} are satisfied.

Applying Theorem~\ref{thm:critical-residual-certificate} yields constants \(M\ge1\) and \(\omega>0\), independent of the admissible parameter trajectory \(d(\cdot)\), such that every solution of
\eqref{eq:regular-evolution} satisfies
\[
\|z(t)\|_X
\le
Me^{-\omega(t-s)}
\|z(s)\|_X,
\qquad
0\le s\le t.
\]

This establishes the claimed finite-to-infinite lifting.

\end{proof}

Theorem~\ref{thm:finite-to-infinite-lifting}
shows that the verification of the infinite-dimensional stability conditions reduces to the analysis of a sufficiently large finite-dimensional critical subsystem. This result provides the theoretical foundation for the constructive static output feedback synthesis developed in the next section.

\section{Constructive Static Output Feedback Synthesis}
\label{sec:synthesis}

This section develops a constructive static output feedback synthesis
for the regularized evolution equation. Building upon the structural
finite-to-infinite lifting established in Section~\ref{sec:structural-lifting}, the controller is designed on the finite-dimensional critical subsystem, while the resulting stability of the original infinite-dimensional evolution follows from the lifting principle.

\subsection{Compatibility Feedback Construction}
\label{subsec:compatibility-feedback}

The constructive synthesis begins with the determination of the compatibility feedback gain \(K_0\). As established in
Section~\ref{sec:factorization}, the compatibility equation
\[
A_1=BK_0C
\]
admits a solution if and only if the corresponding geometric factorization conditions are satisfied. Once this gain has been constructed, the singular reaction term is eliminated and the closed-loop dynamics reduce to the regular evolution equation.

\begin{proposition}[Construction of the Compatibility Feedback]
\label{prop:compatibility-feedback}

Assume that the hypotheses of Theorem~\ref{thm:operator-factorization} are satisfied.

Then there exists \(K_0\in\mathcal L(\mathbb R^{p},\mathbb R^{m})\)
such that
\[
A_1=BK_0C.
\]

Consequently, the singular reaction term is eliminated, and the closed-loop system reduces to the regularized evolution considered throughout Sections~\ref{sec:regular-evolution}--\ref{sec:structural-lifting}.

\end{proposition}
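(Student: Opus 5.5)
The existence claim follows directly from Theorem~\ref{thm:operator-factorization}. Since the hypotheses of that theorem are assumed, i.e.\ \(\ker(C)\subseteq\ker(A_1)\) and \(\operatorname{Ran}(A_1)\subseteq\operatorname{Ran}(B)\), the implication (ii)\(\Rightarrow\)(i) yields a gain \(K_0\in\mathcal L(\mathbb R^p,\mathbb R^m)\) with \(A_1=BK_0C\).

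To make this constructive, I will follow the proof of that theorem step by step:
\begin{enumerate}
\item Define \(\widetilde A:\operatorname{Ran}(C)\to X\) by \(\widetilde A(Cx)=A_1x\). This is well defined because of the kernel condition.
\item Let \(R_B=(B_{|(\ker B)^\perp})^{-1}\) on \(\operatorname{Ran}(B)\). Its use is legitimate because of the range condition, which gives \(\operatorname{Ran}(\widetilde A)=\operatorname{Ran}(A_1)\subseteq\operatorname{Ran}(B)\).
\item Set \(K_0=R_B\widetilde A P_C\), where \(P_C\) is the orthogonal projection of \(\mathbb R^p\) onto \(\operatorname{Ran}(C)\).
\end{enumerate}
In matrix terms this amounts to \(K_0=B^{\dagger}A_1C^{\dagger}\), with \(B^\dagger,C^\dagger\) the Moore--Penrose pseudoinverses. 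I will note this in a remark but not rely on it.

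For the reduction claim, substitute \(A_1=BK_0C\) into the closed-loop equation \eqref{eq:closed-loop-before-desingularization}. The term \((A_1-BK_0C)z\) vanishes identically. Dividing by \(\alpha(t)>0\) on \((0,T]\) then gives exactly \eqref{eq:closed-loop-singular} with \(\mathrm D=0\), which is the regular evolution \eqref{eq:regular-evolution}. Its coefficient \(d(t)\in[d_-,d_+]\) is bounded by Assumption~\ref{ass:time}, and \(L_K=A_r-BK_1C\) is bounded. The singular factor \(1/\alpha\) has therefore disappeared, and this is precisely the equation analyzed in Sections~\ref{sec:regular-evolution}--\ref{sec:structural-lifting}.

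There is no genuine obstacle, since the result is essentially a corollary. The only point to check with care is the well-posedness of \(R_B\widetilde A\): one must ensure that \(\widetilde A\) maps into \(\operatorname{Ran}(B)\), the domain of \(R_B\), and this is exactly where the range hypothesis is used. I will also record that \(K_0\) need not be unique, since any \(K_0+K'\) with \(BK'C=0\) also works, but the statement asks only for existence.
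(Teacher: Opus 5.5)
Your proposal is correct and follows the paper's proof: existence comes from the implication (ii)\(\Rightarrow\)(i) of Theorem~\ref{thm:operator-factorization}, and substituting \(A_1=BK_0C\) into the closed-loop equation removes the \(1/\alpha(t)\) term, leaving the regular evolution \eqref{eq:regular-evolution}. Your extra remarks go beyond what the paper records but are correct: the explicit gain \(K_0=R_B\widetilde A P_C\) does coincide with \(B^{\dagger}A_1C^{\dagger}\), and \(K_0\) is non-unique modulo any \(K'\) with \(BK'C=0\).
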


\begin{proof}

The existence of the compatibility gain follows directly from Theorem~\ref{thm:operator-factorization}, which provides necessary and sufficient conditions for the solvability of
\[
A_1=BK_0C.
\]

Substituting this identity into the singular closed-loop equation eliminates the term
\[
\frac1{\alpha(t)}
\left(
A_1-BK_0C
\right),
\]
thereby reducing the singular closed-loop evolution to its regularized counterpart.

\end{proof}

Proposition~\ref{prop:compatibility-feedback} completes the structural stage of the synthesis. The remaining task is the construction of the dynamic feedback gain \(K_1\), which determines the stability properties of the regularized evolution and is addressed in the following subsection.

\subsection{Critical Static Output Feedback Synthesis}
\label{subsec:critical-synthesis}

Once the compatibility gain has been determined, the remaining design
variable is the dynamic feedback gain \(K_1\). Since the structural
finite-to-infinite lifting established in Section~\ref{sec:structural-lifting} reduces the stability verification to the critical subsystem, the controller synthesis can be carried out entirely in finite dimension.

Let
\[
X_c=\operatorname{Ran}(P_N)
\]
denote the critical subspace introduced in Section~\ref{sec:critical-residual}. The projected dynamics are governed by
\begin{equation}
\dot z_c
=
G_{cc}(d)z_c,
\qquad
G_{cc}(d)
=
-dA_{0,c}
-
P_NL_KP_N,
\label{eq:critical-synthesis-system}
\end{equation}
where
\[
L_K=A_r-BK_1C.
\]

The objective is to determine \(K_1\) so that the projected family admits the common quadratic certificate established in
Proposition~\ref{prop:critical-quadratic-certificate}.

\begin{proposition}[Critical Static Output Feedback Design]
\label{prop:critical-sof-design}

Assume that there exist \(K_1\in\mathcal L(\mathbb R^p,\mathbb R^m)\), a self-adjoint positive-definite operator \(\Pi:X_c\rightarrow X_c\), and a constant \(\mu_c>0\)
such that
\[
\Pi G_{cc}(d)
+
G_{cc}(d)^*\Pi
\le
-2\mu_c\Pi,
\qquad
\forall d\in[d_-,d_+].
\]

Then the critical subsystem is uniformly exponentially stable.

\end{proposition}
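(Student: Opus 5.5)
This statement is essentially a restatement of Proposition~\ref{prop:critical-quadratic-certificate} with the gain \(K_1\) treated as a design variable rather than as fixed data. The plan is therefore to fix the triple \((K_1,\Pi,\mu_c)\) provided by the hypothesis and observe that, once \(K_1\) is frozen, the operator \(L_K=A_r-BK_1C\) is a fixed bounded operator. Consequently \(G_{cc}(d)=-dA_{0,c}-P_NL_KP_N\) is exactly the family considered in Proposition~\ref{prop:critical-quadratic-certificate}. The assumed inequality is then literally \eqref{eq:critical-LMI}, and Proposition~\ref{prop:critical-quadratic-certificate} yields \eqref{eq:critical-exp-estimate} with \(M_c=\sqrt{\lambda_{\max}(\Pi)/\lambda_{\min}(\Pi)}\) and rate \(\mu_c\), uniformly in the measurable trajectory \(d(\cdot)\).

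For a self-contained argument, I would instead repeat the Lyapunov computation directly. First I check that \eqref{eq:critical-synthesis-system} is well posed for measurable \(d(\cdot)\): \(X_c\) is finite-dimensional, \(A_{0,c}=\operatorname{diag}(\lambda_1,\dots,\lambda_N)\) is bounded, and \(d\in L^\infty\), so this is a linear Carathéodory ODE with a unique absolutely continuous solution.

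Next I take \(V_c(z_c)=\langle\Pi z_c,z_c\rangle_X\). Almost everywhere, \(\dot V_c=\langle(\Pi G_{cc}(d(t))+G_{cc}(d(t))^*\Pi)z_c,z_c\rangle_X\le-2\mu_cV_c\), because the matrix inequality holds pointwise for every value \(d(t)\in[d_-,d_+]\). Gronwall's inequality on the absolutely continuous function \(V_c\) then gives \(V_c(t)\le e^{-2\mu_c(t-s)}V_c(s)\). Finally, the bounds \(\lambda_{\min}(\Pi)\|z_c\|^2\le V_c\le\lambda_{\max}(\Pi)\|z_c\|^2\) convert this decay of \(V_c\) into the decay of \(\|z_c\|_X\).

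There is no genuine obstacle here. The only point requiring care is that the derivative of \(V_c\) exists only almost everywhere when \(d(\cdot)\) is merely measurable. This is handled by applying Gronwall's inequality in its integral form to the absolutely continuous function \(V_c\). It is also worth stressing that the constants depend only on \(\Pi\) and \(\mu_c\), never on \(d(\cdot)\), which is what makes the stability uniform.
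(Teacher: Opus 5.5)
Your proposal is correct and matches the paper's proof, which likewise fixes $K_1$ and invokes Proposition~\ref{prop:critical-quadratic-certificate} directly on the resulting family $G_{cc}(d)$. Your self-contained Lyapunov--Gronwall argument reproduces that proposition's proof, and adds some care the paper leaves implicit for merely measurable $d(\cdot)$: Carathéodory solutions and a.e.\ differentiation of the absolutely continuous function $V_c$.
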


\begin{proof}

The result follows immediately from Proposition~\ref{prop:critical-quadratic-certificate}, since the proposed gain satisfies the common quadratic certificate on the critical subspace.

\end{proof}

Proposition~\ref{prop:critical-sof-design} completes the finite-dimensional controller design. The structural finite-to-infinite lifting established in Section~\ref{sec:structural-lifting} then transfers this finite-dimensional construction to the original infinite-dimensional evolution.

\subsection{Constructive Synthesis Procedure}
\label{subsec:constructive-procedure}

The previous subsections provide all the ingredients required for the constructive synthesis of a static output feedback controller. The overall design procedure is summarized below.

\begin{enumerate}

\item
Verify the compatibility conditions of Theorem~\ref{thm:operator-factorization}. If they are satisfied, construct the compatibility feedback gain
\(
K_0
\)
such that
\(
A_1=BK_0C.
\)

\item
Construct the regularized evolution equation by eliminating the singular reaction term.

\item
Choose a sufficiently large truncation order
\(
N
\)
and determine the critical subsystem associated with the spectral decomposition of \(A_0\).

\item
Design the dynamic feedback gain
\(
K_1
\)
so that the critical subsystem satisfies the common quadratic certificate of Proposition~\ref{prop:critical-quadratic-certificate}.

\item
Apply Theorem~\ref{thm:finite-to-infinite-lifting}, which guarantees the uniform exponential stability of the original infinite-dimensional regularized evolution.

\end{enumerate}

The resulting controller
\[
u(t)
=
\left(
K_0+\alpha(t)K_1
\right)
y(t)
\]
provides a constructive static output feedback law satisfying the structural compatibility condition and uniformly stabilizing the regularized evolution equation.

The constructive synthesis developed in this section completes the theoretical development of the paper. The compatibility feedback removes the singular reaction term, the dynamic feedback stabilizes the critical subsystem, and the finite-to-infinite lifting guarantees the resulting uniform exponential stability of the regularized evolution. The following section illustrates these theoretical developments through representative numerical experiments.

\section{Numerical Validation}
\label{sec:numerics}

This section illustrates the proposed constructive approach through representative numerical experiments. The simulations validate the compatibility principle, the regularization mechanism, the uniform stability certificate, the structural finite-to-infinite lifting, and the resulting static output feedback controller.

\subsection{Numerical Setting}
\label{subsec:numerical-setting}

This subsection introduces the numerical benchmark used throughout the validation study. The model parameters, actuator and sensor configurations, controller gains, and discretization settings are fixed throughout the numerical experiments.

The numerical experiments are carried out on the one-dimensional domain \(\Omega=(0,1)\) with homogeneous Dirichlet boundary conditions. The diffusion operator is
\[
A_0=-a^2\partial_{xx},
\qquad
D(A_0)=H^2(0,1)\cap H_0^1(0,1),
\]
where \(a=0.15\). Its normalized eigenfunctions and eigenvalues are
\[
\phi_k(x)=\sqrt{2}\sin(k\pi x),
\qquad
\lambda_k=a^2k^2\pi^2,
\qquad
k\ge1.
\]

The temporal coefficients are chosen as
\[
\alpha(t)=t,
\qquad
\beta(t)=t\bigl(1+0.2\sin(0.5t)\bigr),
\]
so that
\[
d(t)
=
\frac{\beta(t)}{\alpha(t)}
=
1+0.2\sin(0.5t),
\qquad
0.8\le d(t)\le1.2.
\]
This choice produces temporal degeneracy at \(t=0\), while the regularized diffusion coefficient remains uniformly positive over the simulation interval.

Two normalized distributed actuator profiles are considered:
\[
\widetilde b_1(x)=e^{-80(x-0.3)^2},
\qquad
\widetilde b_2(x)=e^{-80(x-0.7)^2},
\]
with
\[
b_i
=
\frac{\widetilde b_i}
{\|\widetilde b_i\|_{L^2(0,1)}},
\qquad
i=1,2.
\]

The input and collocated output operators are defined by
\[
Bu=b_1u_1+b_2u_2,
\]
and
\[
Cz=
\begin{bmatrix}
\langle z,b_1\rangle_{L^2}\\
\langle z,b_2\rangle_{L^2}
\end{bmatrix},
\qquad
C=B^\ast.
\]

The compatible reaction operator is selected as
\[
A_1=BK_0C,
\qquad
K_0=
\begin{bmatrix}
0.8&0.25\\
-0.15&0.6
\end{bmatrix}.
\]

The bounded residual operator is chosen as
\[
A_r=-\rho P_2,
\qquad
\rho=1.2,
\]
where \(P_2\) denotes the orthogonal projection onto \(X_2=\operatorname{span}\{\phi_1,\phi_2\}\).

Defining
\[
B_c=P_2B,
\qquad
C_c=CP_2,
\]
the stabilizing gain is constructed from
\[
B_cK_1C_c
=
-\kappa I_2,
\qquad
\kappa=1.6,
\]
which yields
\[
K_1
=
-\kappa B_c^{-1}C_c^{-1}.
\]

The resulting static output feedback law is
\[
u(t)
=
\bigl(K_0+\alpha(t)K_1\bigr)y(t).
\]

The initial condition is
\[
z_0
=
\phi_1
-0.7\phi_2
+0.5\phi_3
+0.3\phi_5
+0.2\phi_8,
\]
thereby exciting both the critical and residual subspaces.

Unless otherwise specified, the Galerkin approximation order is \(N=60\), the simulation interval is \(t\in[10^{-3},20]\), and the resulting stiff differential system is integrated with \texttt{ode15s} using
\(\mathrm{RelTol}=10^{-8}\) and
\(\mathrm{AbsTol}=10^{-10}\).

This numerical configuration is employed throughout the remainder of this section to assess each stage of the proposed constructive theory.

Once compatibility is enforced, the singular closed-loop evolution reduces to its regularized counterpart. Figure~\ref{fig:regularization} illustrates this identity by comparing the compatible degenerate and regularized trajectories, which remain visually indistinguishable over the entire simulation interval.

Define
\[
e_{\mathrm{abs}}(t)
=
\|z_N^{\mathrm{deg}}(t)-z_N^{\mathrm{reg}}(t)\|_2,
\]
and
\[
e_{\mathrm{rel}}(t)
=
\frac{
\|z_N^{\mathrm{deg}}(t)-z_N^{\mathrm{reg}}(t)\|_2
}{
\max\{\|z_N^{\mathrm{reg}}(t)\|_2,\varepsilon_{\mathrm{num}}\}
}.
\]

Figure~\ref{fig:regularization}(b) gives \(\max_t e_{\mathrm{abs}}(t)=3.15\times10^{-9}\) and \(\max_t e_{\mathrm{rel}}(t)=2.12\times10^{-8}\).

These discrepancies remain at the level of the prescribed integration tolerances, providing a stringent numerical validation of the regularization identity.

The regularized evolution established in this subsection provides the starting point for the stability certification examined next.

\subsection{Compatibility Principle and Regularization}
\label{subsec:num-compatibility}

This subsection illustrates the compatibility principle established in Sections~\ref{sec:compatibility} and~\ref{sec:factorization}. The numerical experiments first examine the effect of compatibility defects and then validate the regularization mechanism obtained when the compatibility condition is exactly satisfied.

To examine the compatibility obstruction, the nominal compatible gain
\(
K_0
\)
is compared with the perturbed gains
\[
K_0^\delta
=
K_0+\delta I_2,
\qquad
\delta\in\{0.05,0.10,0.20\}.
\]
The corresponding projected compatibility defect is
\[
R_N
=
A_{1,N}-B_NK_0^\delta C_N.
\]

The resulting closed-loop trajectories are reported in Figure~\ref{fig:compatibility}(a). The compatible realization exhibits the fastest decay, whereas increasing values of \(\delta\) progressively amplify the initial transient. These trajectories should not be interpreted as indicating that every compatibility defect necessarily destroys stability. Rather, they illustrate the persistence of the singular contribution generated by a nonzero compatibility defect, even when the remaining dynamics remain sufficiently dissipative to preserve long-time decay.

The singular contribution
\[
\frac{\|R_Nz_N(t)\|_2}{\alpha(t)}
\]
is displayed in
Figure~\ref{fig:compatibility}(b). For the compatible realization,
\(
R_N=0,
\)
and the singular term vanishes identically. In contrast, the factor
\(
\frac{1}{\alpha(t)}
\)
produces a pronounced amplification near the degenerate initial time whenever the compatibility condition is violated.

The sensitivity to the lower integration threshold
\(
t_\varepsilon
\)
is illustrated in Figure~\ref{fig:compatibility}(c). At the observation time
\(
t_\star=0.25,
\)
the compatible trajectory remains essentially insensitive to variations of
\[
t_\varepsilon\in[10^{-4},10^{-2}],
\]
whereas the incompatible trajectories become increasingly sensitive as the degenerate time is approached. This behavior provides a numerical illustration of the structural obstruction established in
Section~\ref{sec:compatibility}.

\begin{figure}[!th]
\centering
\includegraphics[width=.95\textwidth]{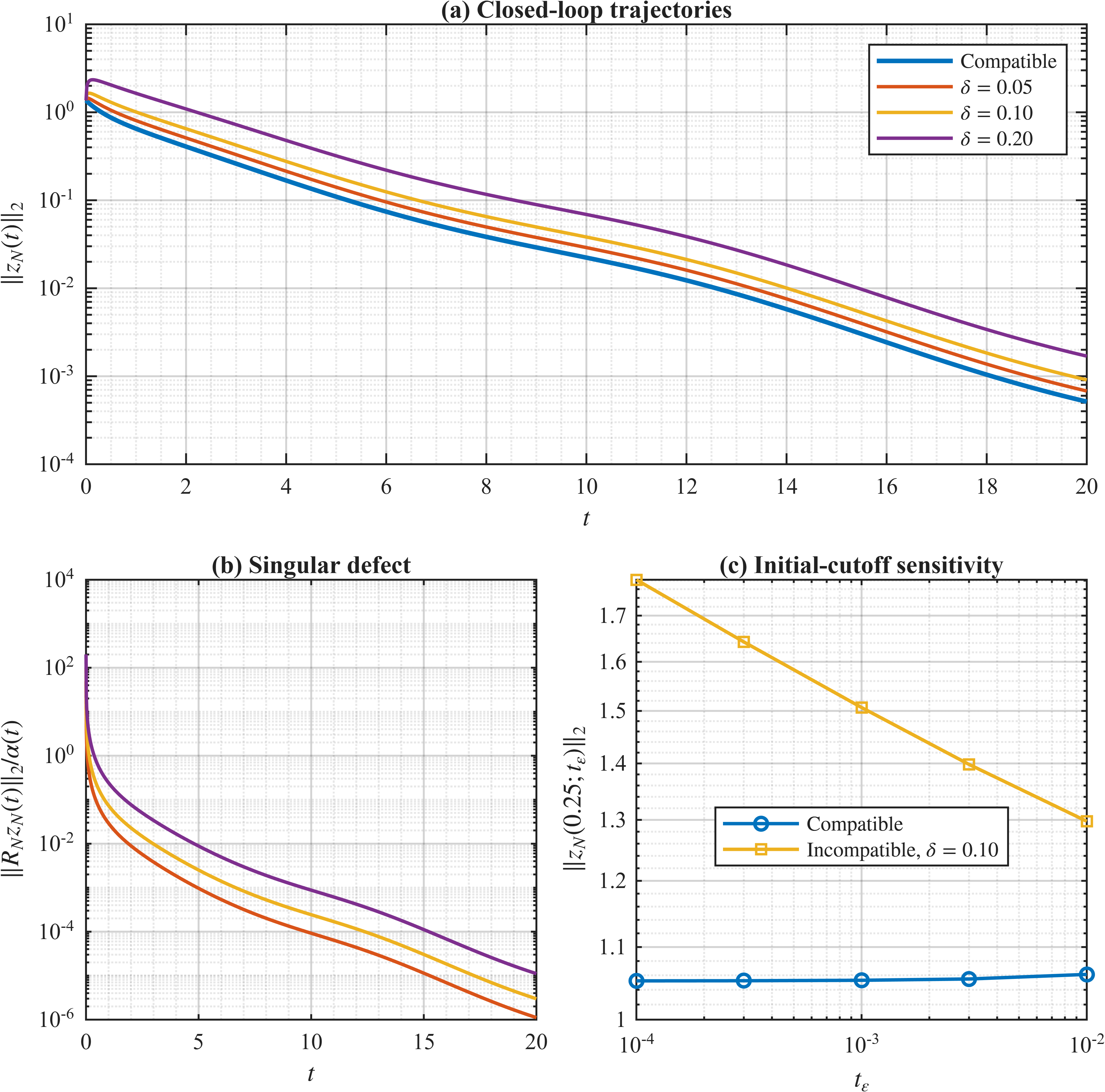}
\caption{Numerical assessment of the compatibility principle.
(a) Closed-loop trajectories for the compatible realization and for
three compatibility defects.
(b) Singular contribution \(\|R_Nz_N(t)\|_2/\alpha(t)\) in the incompatible cases.
(c) Sensitivity of the state at \(t_\star=0.25\) to the lower
integration threshold \(t_\varepsilon\).}
\label{fig:compatibility}
\end{figure}

Once compatibility is enforced, the singular closed-loop evolution reduces to its regularized counterpart. The corresponding numerical trajectories are compared in Figure~\ref{fig:regularization}(a), where the compatible degenerate and regularized solutions remain visually indistinguishable throughout the simulation interval.

Define
\[
e_{\mathrm{abs}}(t)
=
\|z_N^{\mathrm{deg}}(t)-z_N^{\mathrm{reg}}(t)\|_2
\]
and
\[
e_{\mathrm{rel}}(t)
=
\frac{
\|z_N^{\mathrm{deg}}(t)-z_N^{\mathrm{reg}}(t)\|_2
}{
\max\{\|z_N^{\mathrm{reg}}(t)\|_2,\varepsilon_{\mathrm{num}}\}
}.
\]

The trajectory discrepancies reported in
Figure~\ref{fig:regularization}(b) satisfy
\[
\max_t e_{\mathrm{abs}}(t)
=
3.15\times10^{-9},
\]
and
\[
\max_t e_{\mathrm{rel}}(t)
=
2.12\times10^{-8}.
\]

These discrepancies remain at the level of the prescribed numerical tolerances, providing a stringent validation of the regularization identity established in Section~\ref{sec:regular-evolution}.

\begin{figure}[!th]
\centering
\includegraphics[width=\columnwidth]{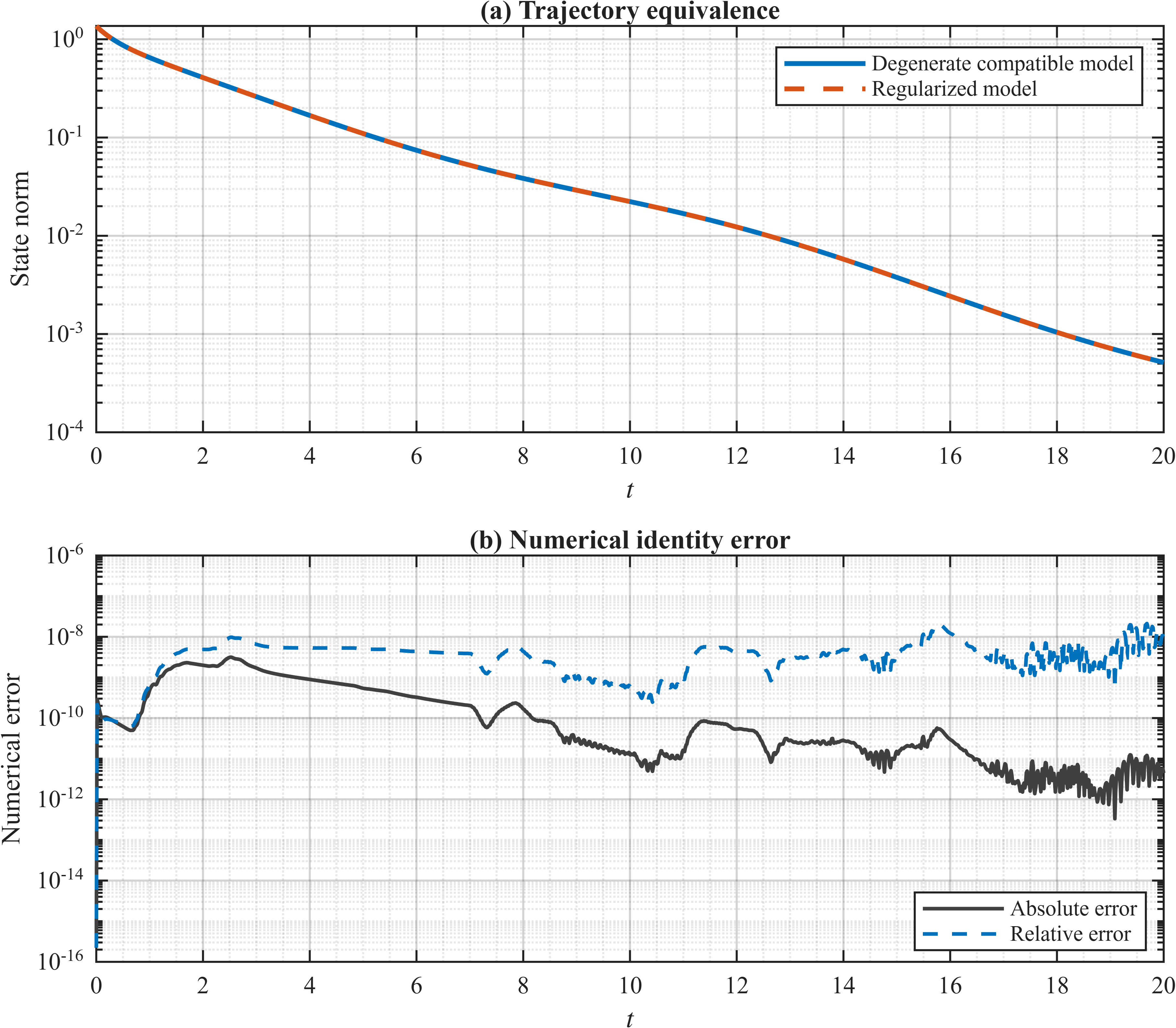}
\caption{Numerical verification of the regularization identity.
(a) Compatible degenerate and regularized trajectories.
(b) Absolute and relative trajectory discrepancies.}
\label{fig:regularization}
\end{figure}

The regularized evolution validated in this subsection provides the starting point for the stability certification examined next.

\subsection{Certified Critical--Residual Stabilization}
\label{subsec:num-certificate}

This subsection illustrates the uniform stability certificate developed in Section~\ref{sec:critical-residual}. The numerical experiments first verify the critical--residual certification at the minimal admissible spectral cutoff and then examine the asymptotic behavior of the certification quantities as the truncation order increases.

The feedback gain \(K_1\) is synthesized from the two unstable modes, whereas the critical--residual certificate is evaluated for increasing spectral cutoffs. The smallest cutoff satisfying the complete small-gain condition is \(n_\star=3\). Thus, the controller synthesis dimension remains equal to two, while one additional dissipative mode is incorporated into the finite-dimensional certificate.

At \(n_\star=3\), the computed certification constants are
\[
\mu_c=0.4068,
\qquad
\mu_s=2.8424,
\]
and
\[
\gamma_{cs}
=
\gamma_{sc}
=
1.006.
\]

The weight maximizing the algebraic small-gain margin is \(\eta_{\mathrm{margin}}=1.2871\), for which
\[
4\eta_{\mathrm{margin}}\mu_c\mu_s
-
\bigl(
\gamma_{cs}
+
\eta_{\mathrm{margin}}\gamma_{sc}
\bigr)^2
=
0.6592.
\]
The corresponding normalized margin equals \(0.1107\).

A second optimization, aimed at maximizing the certified decay rate, gives \(\eta_{\mathrm{rate}}=0.9982\), together with
\[
M=1.0009,
\qquad
\omega_{\mathrm{cert}}=0.0450.
\]
The resulting certificate is
\[
\|z_N(t)\|_2
\le
1.0009\,
e^{-0.0450(t-t_0)}
\|z_N(t_0)\|_2.
\]

For \(n\ge2\), one has \(Q_nA_rQ_n=0\). Moreover, since \(C=B^\ast\) and \(K_1\) is negative definite, \(-Q_nBK_1B^\ast Q_n\) is positive semidefinite in the dissipative operator. Consequently, the feedback does not reduce the residual diffusion margin, which justifies the estimate
\[
\mu_s=d_{\min}\lambda_{n+1}.
\]

The certified closed-loop responses are reported in Figure~\ref{fig:certificate}(a). Without the stabilizing component \(K_1\), the critical modes grow rapidly. Incorporating the proposed feedback restores exponential decay of the complete state.

Figure~\ref{fig:certificate}(b) displays the total, critical, and residual norms together with the certified exponential envelope. The observed asymptotic decay rate is \(\omega_{\mathrm{obs}}=0.3674\), which satisfies
\[
\frac{\omega_{\mathrm{obs}}}
{\omega_{\mathrm{cert}}}
=
8.16.
\]
The difference reflects the conservatism introduced by the uniform operator estimates and the critical--residual small-gain argument.
Nevertheless, the certified envelope remains above the computed trajectory throughout the simulation, in agreement with the theoretical certificate.

\begin{figure}[!th]
\centering
\includegraphics[width=\columnwidth]{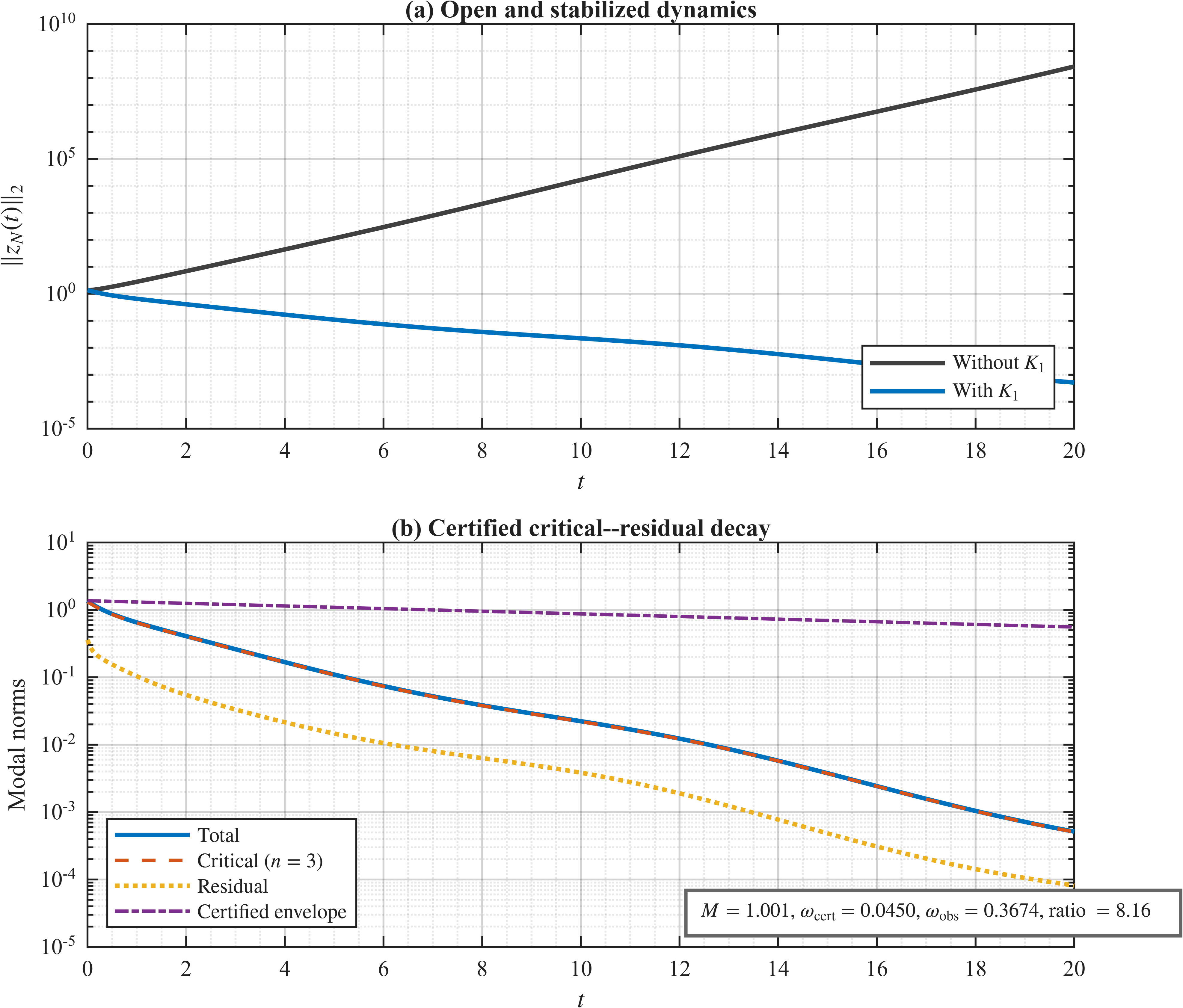}
\caption{Critical--residual stability certification.
(a) Regularized dynamics without and with the stabilizing gain \(K_1\).
(b) Total, critical, and residual state norms together with the certified exponential envelope.}
\label{fig:certificate}
\end{figure}

The structural mechanism underlying the stability certificate is illustrated in Figure~\ref{fig:spillover}. Figure~\ref{fig:spillover}(a) reports
\[
\gamma_{cs}(n)
=
\|P_nL_KQ_n\|,
\qquad
\gamma_{sc}(n)
=
\|Q_nL_KP_n\|,
\]
and
\[
\gamma_{ss}(n)
=
\|Q_nL_KQ_n\|,
\]
computed on the fixed reference approximation
\[
N_{\mathrm{ref}}=160.
\]
All three quantities decrease markedly as the spectral cutoff increases. In particular, \(\gamma_{ss}(n)\) decreases by more than seven orders of magnitude over the displayed range. The local plateaus originate from the symmetry of the distributed actuator profiles and do not alter the overall decay trend.

Figure~\ref{fig:spillover}(b) reports the normalized small-gain margin together with the largest certified decay rate. The margin first becomes positive at
\[
n=n_\star=3,
\]
which is precisely the smallest truncation order satisfying the theoretical certificate. The certified decay rate subsequently approaches a plateau, indicating that increasing the truncation order
beyond this point yields only marginal improvement of the guaranteed
stability estimate.

\begin{figure}[!th]
\centering
\includegraphics[width=\columnwidth]{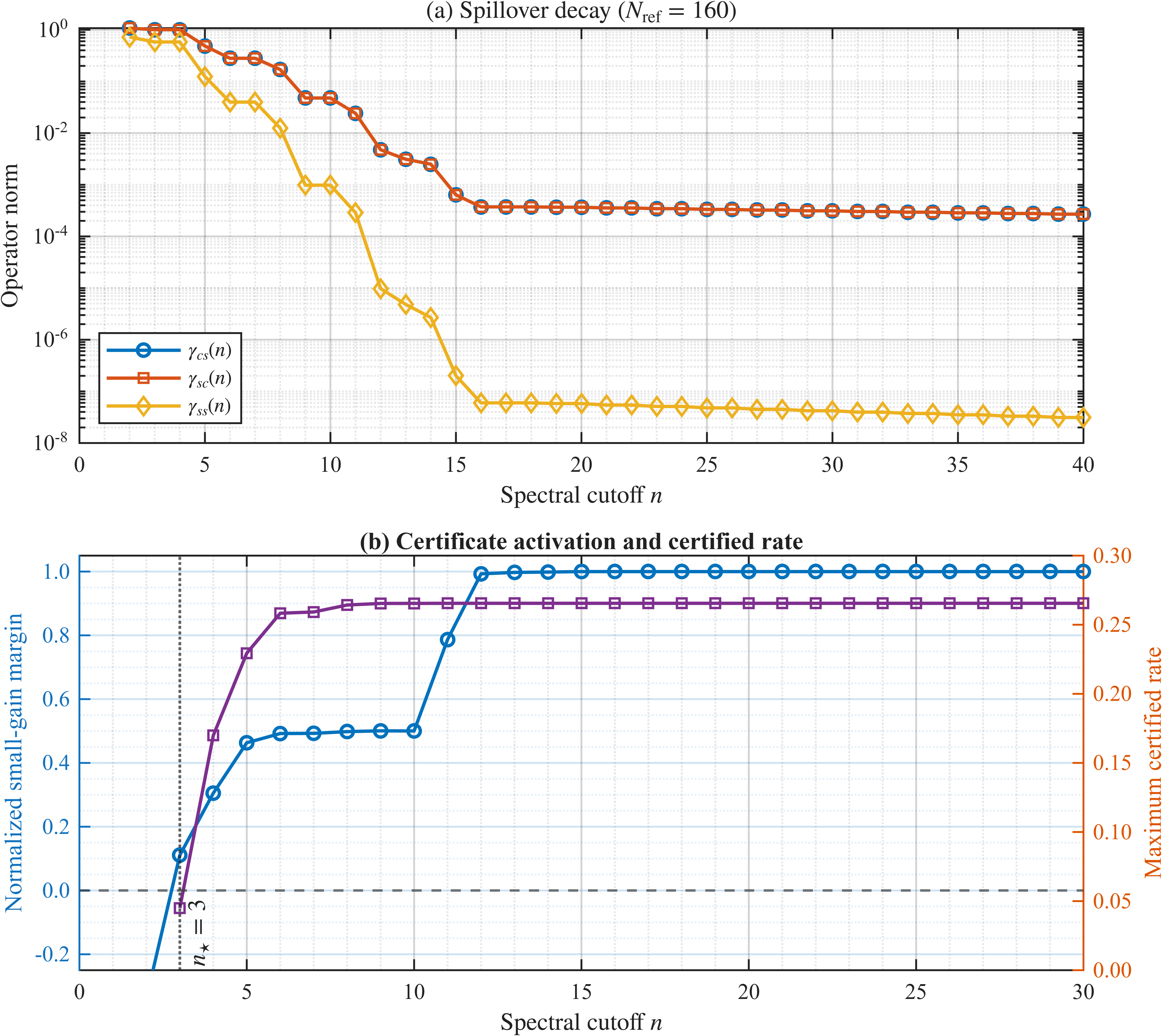}
\caption{Spectral certification mechanism.
(a) Decay of the critical--residual spillover norms on a fixed
high-order reference model.
(b) Normalized small-gain margin and maximum certified decay rate versus
the spectral cutoff.}
\label{fig:spillover}
\end{figure}

The numerical results are fully consistent with the uniform stability certificate established in Section~\ref{sec:critical-residual}. The
remaining numerical experiments illustrate how this finite-dimensional certification extends to the original infinite-dimensional evolution through the structural finite-to-infinite lifting.

\subsection{Finite-to-Infinite Lifting and Control Implementation}
\label{subsec:num-lifting}

This subsection illustrates the structural finite-to-infinite lifting
established in Section~\ref{sec:structural-lifting}. The numerical experiments first examine the robustness of the synthesized controller with respect to the Galerkin approximation order and then illustrate the resulting static output feedback implementation.

The finite-to-infinite lifting mechanism is examined by applying the
same feedback gain \(K_1\), synthesized from the first two modes, to
Galerkin approximations of orders
\[
N\in\{20,40,60,80\}.
\]
Since the critical matrices \(B_c\) and \(C_c\) are independent of the approximation order, the controller remains unchanged up to numerical roundoff:
\[
\max_N
\|K_1^{(N)}-K_1^{\mathrm{nom}}\|_2
<
10^{-15}.
\]

The corresponding closed-loop trajectories are reported in
Figure~\ref{fig:lifting}(a). They remain visually indistinguishable over the entire simulation interval despite the increasing approximation dimension.

The maximum relative discrepancies between successive Galerkin
approximations are shown in Figure~\ref{fig:lifting}(b) and satisfy
\[
1.63\times10^{-8},
\qquad
1.11\times10^{-8},
\qquad
8.58\times10^{-9}.
\]

These discrepancies are already at the level of the prescribed temporal integration accuracy for \(N=20\), indicating that the numerical solution is essentially insensitive to further increases in the approximation order. This observation provides a numerical illustration of the finite-to-infinite lifting mechanism without being interpreted as an independent proof of infinite-dimensional convergence.

\begin{figure}[!th]
\centering
\includegraphics[width=\columnwidth]{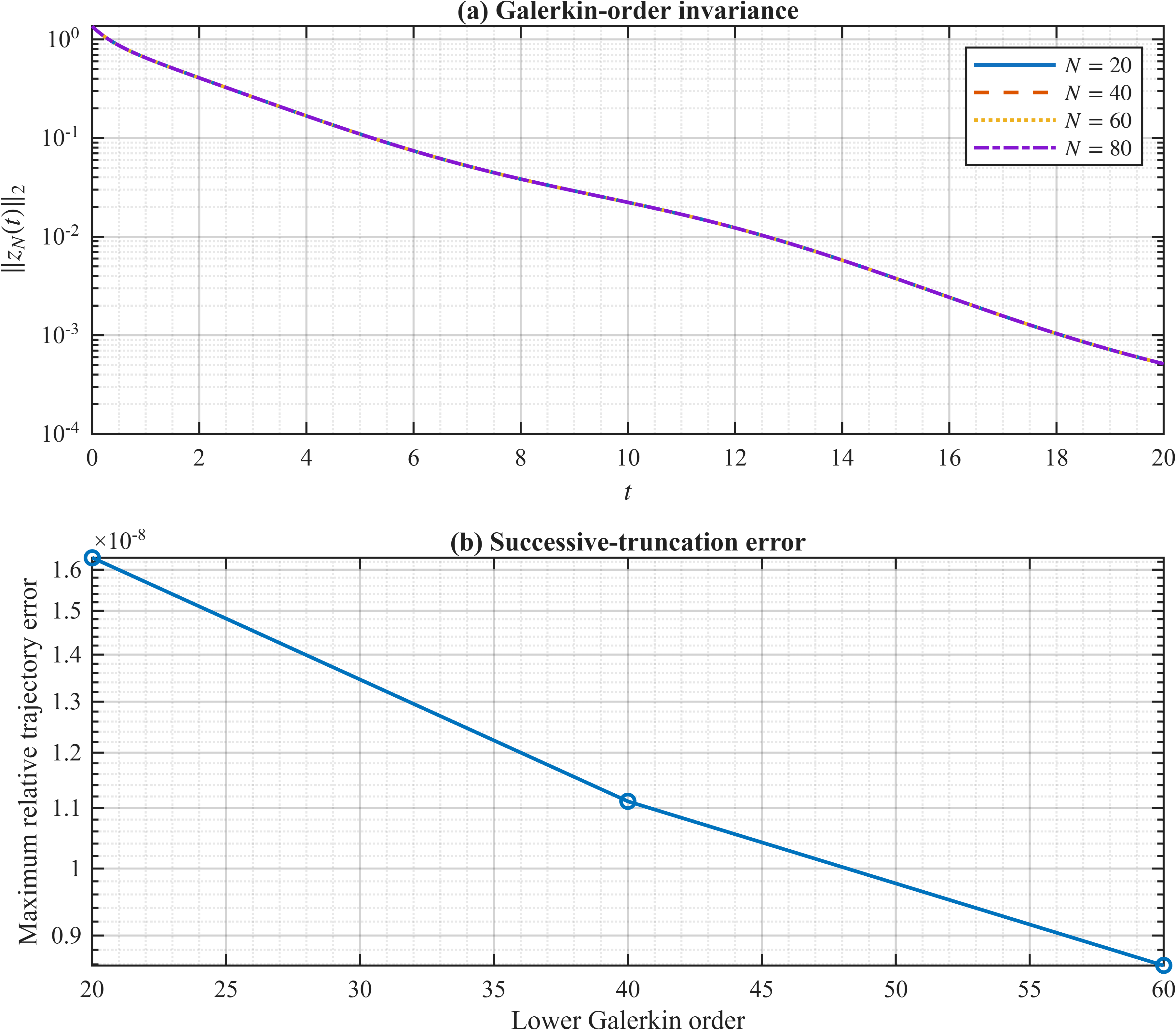}
\caption{Numerical assessment of the finite-to-infinite lifting.
(a) Closed-loop trajectories for four Galerkin orders using the same
critical controller.
(b) Maximum relative discrepancies between successive truncations.}
\label{fig:lifting}
\end{figure}

The implementation of the resulting static output feedback law is
illustrated next. Recall that the control input is given by
\[
u(t)
=
K_0y(t)
+
\alpha(t)K_1y(t).
\]

The total control norm together with its compatibility and stabilizing components is displayed in Figure~\ref{fig:control}(a). In the immediate neighborhood of the degenerate initial time, the stabilizing contribution is naturally attenuated by the factor
\(
\alpha(t),
\)
so that the compatibility component dominates the control action. As the regularized dynamics evolve, the stabilizing component rapidly becomes dominant before both contributions decay together with the state.

The control effort remains uniformly bounded throughout the simulation, with
\[
\max_{t\in[10^{-3},20]}
\|u(t)\|_2
=
1.1451.
\]
The individual actuator commands are reported in Figure~\ref{fig:control}(b). Both inputs remain smooth over the entire simulation interval and converge asymptotically to zero, confirming the practical implementability of the proposed static output feedback law without requiring any singular control effort near
\(
t=0.
\)

\begin{figure}[H]
\centering
\includegraphics[width=\columnwidth]{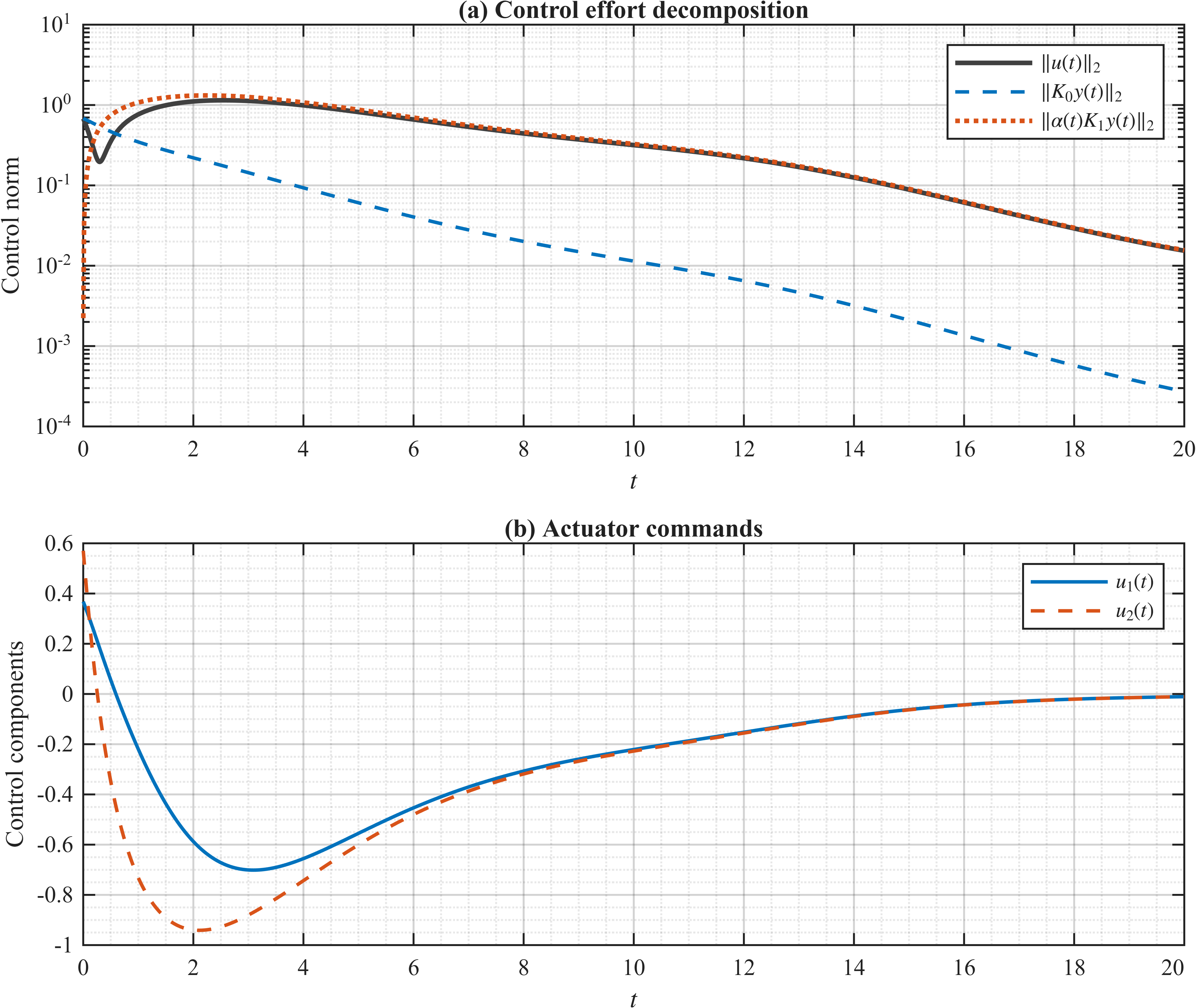}
\caption{Static output feedback implementation.
(a) Total control effort and decomposition into compatibility and
stabilizing components.
(b) Distributed actuator commands.}
\label{fig:control}
\end{figure}

The numerical experiments are fully consistent with the theoretical
developments established throughout the paper. They illustrate the
compatibility principle, validate the regularization mechanism, confirm the critical--residual stability certificate, demonstrate the structural finite-to-infinite lifting, and show that the resulting static output-feedback controller can be implemented without singular control effort.

\section{Conclusion}
\label{sec:conclusion}

The main outcome of this work is the identification of structural admissibility as the central issue underlying feedback design for temporally degenerate parabolic systems. For this class of systems, temporal degeneracy does not merely complicate stability analysis; it modifies the problem itself by making the existence of a mathematically admissible closed-loop evolution a prerequisite to any subsequent control objective.

This change of perspective explains the overall organization of the theory. The compatibility principle removes the structural obstruction introduced by temporal degeneracy, thereby restoring a regular closed-loop evolution. Stability certification, finite-to-infinite lifting, and feedback synthesis then arise as successive consequences of this structural resolution, leading to a complete characterization of the class of temporally degenerate linear parabolic systems considered here.

The resulting theory also delineates its own natural boundaries. Extending the compatibility principle beyond exact compatibility, establishing analogous structural conditions for nonlinear degenerate systems, and exploiting compatibility as a guideline for actuator--sensor co-design remain significant open questions. These challenges do not reflect shortcomings of the present analysis; rather, they define the next stage in the development of the theory.

More broadly, the results support the view that structural admissibility is not simply a technical issue associated with degenerate dynamics, but a fundamental notion for the analysis of singular infinite-dimensional control systems. For such systems, establishing the existence of an admissible closed-loop evolution may be as fundamental as establishing its stability.

\section*{Funding}
There was no funding for the paper.





\end{document}